\theoremstyle{definition}
\newtheorem{definition}{Definition}
\newtheorem{example}{Example}
\theoremstyle{remark}
\newtheorem*{remark}{Remark}
\theoremstyle{plain}
\newtheorem{theorem}{Theorem}
\newtheorem{corollary}{Corollary}
\newtheorem{lemma}{Lemma}
\newtheorem{prop}{Proposition}
\DeclareMathOperator{\im}{Im}
\DeclareMathOperator{\re}{Re}
\DeclareMathOperator{\defeq}{\stackrel{def^{\underline{n}}}{=}}
\begin{document}

\title{Laplace--Carleson embeddings and weighted infinite-time admissibility}
\author{Andrzej S. Kucik}
\date{}
\maketitle

\begin{flushright}
School of Mathematics \\
University of Leeds \\
Leeds LS2 9JT \\
United Kingdom \\
e-mail: \ttfamily{mmask@leeds.ac.uk}

\end{flushright}

\begin{abstract}
In this paper we will establish necessary and sufficient conditions for a Laplace--Carleson embedding to be bounded for certain spaces of functions on the positive half-line. We will use these results to characterise weighted (infinite-time) admissibility of control and observation operators. We present examples of weighted admissibility criterion for one-dimensional heat equation with Neumann boundary conditions, and a~cetrain parabolic diagonal system which was previously known to be not admissible in the~unweighted setting.\\
\textbf{Mathematics Subject Classification (2010).} 30H20, 30H25, 47A57, 93B28. \\
\textbf{Keywords.} Admissibility, Besov spaces, Carleson measure, Laplace--Carleson embedding, Laplace transform, linear evolution equation, semigroup system.
\end{abstract}

\section{Introduction}
Although in most common physical applications the norms used are usually the unweighted $L^1$ or $L^2$ norms, it may sometimes be useful to consider weighted $L^p$ norms. The main purpose of this paper is to generalise admissibility criteria, obtained in \cite{jacob2014} for weighted $L^2$- and unweighted $L^p$-admissibility (given in terms of Carleson measures and Carleson-Laplace embeddings described in \cite{jacob2013}) to weighted $L^p(0, \, \infty)$ case, applying and generalising recent results from \cite{kucik2017}, concerning the spaces defined and studied in \cite{kucik2016}. A powerful boundedness criterion for the Laplace--Carleson embeddings for weighted $L^p$ spaces, containing the earlier version from \cite{jacob2013} as a special case, is also proved here.

This article is structured as follows. In Section \ref{sec:admissibility} the theory and definitions of admissibility for diagonal semigroups are outlined. Two important theorems, linking admissibility to Laplace--Carleson embeddings, are also cited there. In Section \ref{sec:kernels} some results concerning reproducing kernel Hilbert spaces are given. As a special case, the definitions of so-called Zen spaces and their generalisation are provided, presenting their connection to the admissibility concept. In Section \ref{sec:Ap} boundedness of Carleson embeddings for these generalised spaces is studied, following a similar analysis from \cite{kucik2017}. And finally, in Section \ref{sec:sectorial} boundedness of Laplace--Carleson embeddings for sectorial measures is characterised there, and we believe that Theorem \ref{thm:sectorial} is the most important result of this paper. This theorem is followed by two examples illustrating the~weighted admissibility for diagonal systems.

\section{Admissibility for diagonal semigroups}
\label{sec:admissibility}

Let $H$ be a Hilbert space and let $(\mathbb{T}_t)_{t \geq0}$ be a \emph{strongly continuous} (or $C_0$-) \emph{semigroup} of bounded linear operators on $H$ with the \emph{infinitesimal generator} $A : \mathcal{D}(A) \longrightarrow H$, defined by
\begin{displaymath}
Ax := \lim_{t \rightarrow 0^+} \frac{\mathbb{T}_t x - x}{t}, \qquad \text{ where } \qquad \mathcal{D}(A) := \left\{x \in X \; : \; \lim_{t \rightarrow 0^+} \frac{\mathbb{T}_t x - x}{t} \text{ exists} \right\}.
\end{displaymath}

Let us consider the linear system
\begin{equation}
\dot{x}(t) = Ax(t)+Bu(t), \qquad \qquad x(0) = x_0, \qquad \qquad t \geq 0,
\label{eq:}
\end{equation}
where $u(t) \in \mathbb{C}$ is the \emph{input} at time $t$ and $B : \mathbb{C} \longrightarrow \mathcal{D}(A^*)'$ is the \emph{control operator}. Here $\mathcal{D}(A^*)'$ is the completion of $H$ with respect to the norm
\begin{displaymath}
	\|x\|_{\mathcal{D}(A^*)'} := \left\|(\beta-A)^{-1} x \right\|_H,
\end{displaymath}
for any fixed $\beta \in \rho(A)$ (the resolvent set of $A$). An~operator $B$ is said to be \emph{finite-time $L^2$-admissible}, if for every $\tau >0$ and all $u \in L^2[0, \, \infty)$ the Bochner integral $\int_0^\tau \mathbb{T}_{\tau-t} Bu(t) \, dt$ lies in $H$ (see Definition 4.2.1, p. 116 in \cite{tucsnak2009}). Consequently, there exists $m_\tau>0$ such that
\begin{displaymath}
	\left\|\int_0^\tau \mathbb{T}_{\tau-t} Bu(t) \, dt \right\|_H \leq m_\tau \|u\|_{L^2[0, \, \infty)} \qquad \quad (\forall u \in L^2[0, \, \infty)).
\end{displaymath}
(see Proposition 4.2.2, also in \cite{tucsnak2009}). The admissibility criterion guarantees that the equation \eqref{eq:} has continuous (mild) solution 
\begin{displaymath}
	x(\tau) = \mathbb{T}_\tau x_0 + \int_0^\tau \mathbb{T}_{\tau-t} Bu(t) \, dt\qquad \qquad (\forall \tau \geq 0),
\end{displaymath}
with values in $H$ (see Proposition 4.2.5 in \cite{tucsnak2009}). If the constant $m_\tau$ can be chosen independently of $\tau>0$, then we say that $B$ is \emph{(infinite-time) $L^2$-admissible}. It follows that $B$ is $L^2$-admissible if and only if there exists a~constant $m_0>0$ such that
\begin{displaymath}
	\left\|\int_0^\infty \mathbb{T}_t Bu(t) \, dt \right\|_H \leq m_0 \|u\|_{L^2[0, \, \infty)} \qquad \qquad (\forall u \in L^2[0, \, \infty)),
\end{displaymath}
(see Remark 4.6.2 in \cite{tucsnak2009} and Remark 2.2 in \cite{haak2009}). This is a necessary condition for the \emph{state} $x(t)$ to lie in $H$. For more details see for example \cite{jacob2004}, and for the non-Hilbertian analogue: \cite{haak2010}, \cite{weiss1989}, \cite{weiss1989'}.

We may also consider the system
\begin{displaymath}
	\dot{x}(t) = Ax(t), \qquad \qquad y(t) = Cx(t), \qquad \qquad x(0) = x_0,
\end{displaymath}
where $C : \mathcal{D}(A) \longrightarrow \mathbb{C}$ is an \emph{$A$-bounded observation operator}, that is there exist $m_1, \, m_2 >0$ such that
\begin{displaymath}
	\|Cx\| \leq m_1 \|x\| + m_2 \|Ax\| \qquad \qquad (\forall x \in \mathcal{D}(A)).
\end{displaymath}
The operator $C$ is said to be \emph{(infinite-time) admissible} if there exists $m_0>0$ such that
\begin{displaymath}
	\|y\|_{L^2[0, \, \infty)} \leq m_0 \|x_0\|_H.
\end{displaymath}

There is duality between these two conditions, namely: $B$ is an admissible control operator for $(\mathbb{T}_t)_{t\geq 0}$ if and only if $B^*$ is an admissible observation operator for the dual semigroup $(\mathbb{T}^*_t)_{t\geq 0}$.

More detailed treatment of admissibility of control and observation semigroup operators and the theory of well-posed linear evolution equations is given the survey\cite{jacob2004} and the book \cite{tucsnak2009}.

For diagonal semigroups (see Example 2.6.6 in \cite{tucsnak2009}) the admissibility condition is linked to the theory of Carleson measures in the following way (see \cite{ho1983}, \cite{weiss1988}). Suppose that $A$ has a Riesz basis of eigenvectors $(\phi_k)_{k\in \mathbb{N}}$ (that is, there exists invertible $Q \in \mathscr{L}(H, \ell^2)$ such that $Q\phi_k = e_k$, for all $k \in \mathbb{N}$, where $(e_k)_{k\in \mathbb{N}}$ is the standard basis for $\ell^2$, i.e. each $e_k$ has 1 as its $k$-th entry and zeros elsewhere), with eigenvalues $(\lambda_k)_{k\in \mathbb{N}}$, each of them lying in the open left complex half-plane $\mathbb{C}_- := \left\{z \in \mathbb{C} \; : \; \re(z)<0\right\}$. Then a scalar control operator $B$, corresponding to a sequence $(b_k)_{k\in \mathbb{N}}$, is admissible if and only if the measure
\begin{displaymath}
	\mu := \sum_{k=1}^\infty |b_k|^2 \delta_{-\lambda_k}
\end{displaymath}
is a Carleson measure for the Hardy space $H^2(\mathbb{C}_+)$ on the right complex half-plane, that means the canonical embedding $H^2(\mathbb{C}_+) \longrightarrow L^2(\mathbb{C}_+, \, \mu)$ is bounded. An extension to normal semigroups has also been made in \cite{weiss1999}.

In some applications, requiring the input $u$ to lie $L^2[0, \, \infty)$ might be unsuitable, and hence a more general setting ought to be considered. In \cite{haak2005} and \cite{wynn2010} a concept of $\alpha$-admissibility, in which $u$ must lie in weighted $L^2_{t^\alpha}(0, \,\infty)$, for $\alpha>-1$ in the first, and $-1 < \alpha < 0$ in the latter article, was studied. The second paper linked admissibility to the Carleson measures, using the fact that the Laplace transform maps $L^2_{t^\alpha}(0, \, \infty)$ onto a~weighted Bergman space - in this article we adopt a similar approach. Papers \cite{haak2007} and \cite{haak2010} discuss the same problem in non-Hilbertian setting. Further generalisations, to $L^2_w(0, \, \infty)$, for any positive measurable weight $w$ and unweighted $L^p(0, \, \infty), \, 1 \leq p < \infty$, have been obtained in \cite{jacob2014}. In this paper we shall present it for the weighted $L^p_w(0, \, \infty)$ case, for certain weights (measurable selfmaps on $(0, \, \infty)$) $w$, and by the the weighted $L^p_w(0, \, \infty)$ we mean the Banach space of all functions $f : (0, \, \infty) \rightarrow \mathbb{C}$ satisfying
\begin{displaymath}
	\|f\|_{L^p_w(0, \, \infty)} := \left( \int_0^\infty |f(t)|^p w(t) \, dt \right)^{1/p} < \infty \qquad \qquad (1 \leq p < \infty).
\end{displaymath}

Given $1 \leq q < \infty$, assume that the semigroup $(\mathbb{T}_t)_{t \geq 0}$ acts on a Banach space $X$ with a $q-$Riesz basis (having the same definition as above, but with $\mathscr{L}(H, \ell^2)$ replaced by $\mathscr{L}(X, \ell^q)$), of eigenvectors $(\phi_k)_{k \in \mathbb{N}}$ with corresponding eigenvalues $(\lambda_k)_{k \in \mathbb{N}}\subset \mathbb{C}_-$; that is
\begin{displaymath}
	\mathbb{T}_t \phi_k = e^{\lambda_k t} \phi_k \qquad \qquad (\forall k \in \mathbb{N}),
\end{displaymath}
Suppose that $(\phi_k)_{k \in \mathbb{N}}$ is also a Schauder basis of $X$ such that there exist constants $c, C >0$ such that
\begin{displaymath}
	c \sum_{k=1}^\infty |a_k|^q \leq \left\|\sum_{k=1}^\infty a_k \phi_k \right\|^q \leq C \sum_{k=1}^\infty |a_k|^q \qquad \qquad (\forall (a_k)_{k=1}^\infty \in \ell^q).
\end{displaymath}
This means that we can effectively identify $X$ with $\ell^q$ and this shall be our standing assumption for the whole paper.

The following two theorems, proved in \cite{jacob2014}, link admissibility of control and observation operators with Laplace--Carleson embeddings (that is, Carleson embeddings induced by the Laplace transform). These results were presented there for weighted $L^2$ spaces and unweighted $L^p$ spaces on $(0, \, \infty)$, but it is easy to check that their proofs remain valid even for weighted $L^p$ spaces.

\begin{theorem}[Theorem 2.1 in \cite{jacob2014}]
\label{thm:1stjacob}
Let $1 \leq p, \, q  < \infty$, and suppose $X$ is defined as above. Let $w$ be a measurable self-map on $(0, \, \infty)$, and let $B$ be a bounded linear map from $\mathbb{C}$ to $\mathcal{D}(A^*)'$ corresponding to the sequence $(b_k)_{k \in \mathbb{N}}$. The control operator $B$ is $L^p_w$-admissible for $(\mathbb{T}_t)_{t \geq 0}$, that is, there exists a constant $m_0>0$ such that
\begin{displaymath}
	\left\|\int_0^\infty \mathbb{T}_t Bu(t) \, dt \right\|_X \leq m_0 \|u\|_{L^p_w(0, \, \infty)} \defeq m_0 \left(\int_0^\infty |u(t)|^p w(t)  \, dt \right)^{1/p},
\end{displaymath}
for all $u \in L^p_w(0, \, \infty)$, if and only if the Laplace transform induces a continuous mapping from $L^p_w(0, \, \infty)$ into $L^q(\mathbb{C}_+, \, \mu)$, where $\mu$ is the measure $\sum_{k=1}^\infty |b_k|^q \delta_{-\lambda_k}$.
\end{theorem}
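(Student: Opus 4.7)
The plan is to show directly that both sides of the claimed equivalence reduce to the same $\ell^q$-type expression, via the identification of $X$ with $\ell^q$ furnished by the Riesz basis $(\phi_k)$. Since $B : \mathbb{C} \to \mathcal{D}(A^*)'$ is the scalar operator corresponding to $(b_k)_{k \in \mathbb{N}}$, we have $Bu(t) = u(t) \sum_k b_k \phi_k$ in $\mathcal{D}(A^*)'$, and because $\mathbb{T}_t \phi_k = e^{\lambda_k t} \phi_k$, the integrand is formally
\begin{displaymath}
\mathbb{T}_t Bu(t) = \sum_{k=1}^\infty u(t) b_k e^{\lambda_k t} \phi_k.
\end{displaymath}
Integrating coordinatewise and recognising $\int_0^\infty u(t) e^{\lambda_k t}\, dt = (\mathcal{L}u)(-\lambda_k)$ (note $\re(-\lambda_k) > 0$), we obtain the candidate identity
\begin{displaymath}
\int_0^\infty \mathbb{T}_t Bu(t)\, dt = \sum_{k=1}^\infty b_k (\mathcal{L}u)(-\lambda_k)\, \phi_k.
\end{displaymath}

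Granting this identity, the proof concludes in one line: by the $q$-Riesz basis inequality,
\begin{displaymath}
\left\| \int_0^\infty \mathbb{T}_t Bu(t)\, dt \right\|_X^q \asymp \sum_{k=1}^\infty |b_k|^q |(\mathcal{L}u)(-\lambda_k)|^q = \int_{\mathbb{C}_+} |\mathcal{L}u(z)|^q\, d\mu(z),
\end{displaymath}
so the admissibility inequality $\|\int_0^\infty \mathbb{T}_t Bu\, dt\|_X \leq m_0 \|u\|_{L^p_w}$ holds for every $u \in L^p_w(0,\infty)$ if and only if $\mathcal{L} : L^p_w(0,\infty) \to L^q(\mathbb{C}_+, \mu)$ is bounded, which is exactly the Laplace--Carleson embedding statement.

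The main obstacle is making the coordinatewise computation rigorous, since $B$ takes values in the larger space $\mathcal{D}(A^*)'$ rather than in $X$ itself, and the series $\sum b_k \phi_k$ need not converge in $X$. The standard way around this is a density argument: first restrict to truncated data, namely $u \in C_c(0,\infty)$ and finite sections $B^{(N)}$ corresponding to $(b_k)_{k \leq N}$, where all sums are finite and the identity above is immediate by linearity of the Bochner integral and continuity of $\mathbb{T}_t$ acting on the finite-dimensional span of $\phi_1,\dots,\phi_N$. One then passes to the limit $N \to \infty$ using the Riesz basis estimate to control the $X$-norm by the partial $\ell^q$ sums, and the monotone convergence theorem on the right-hand measure-theoretic side; finally $u$ is approximated by compactly supported functions in $L^p_w(0,\infty)$. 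The equivalence of the two (possibly infinite) quantities then transfers to the full theorem in both directions, exactly as carried out in the proof of Theorem~2.1 of \cite{jacob2014}, the only adjustment being the use of the weight $w$ and the exponent $p$ in the domain norm, which affects neither the algebraic identity nor the $\ell^q$ comparison on the range side.
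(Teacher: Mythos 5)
Your argument is correct and is essentially the same as the intended one: the paper offers no proof of this statement, simply citing \cite{jacob2014} and noting that the argument there carries over verbatim to weighted $L^p$, and your diagonalisation — reducing the Bochner integral to $\sum_k b_k (\mathcal{L}u)(-\lambda_k)\phi_k$ and invoking the $q$-Riesz basis equivalence to identify the $X$-norm with the $L^q(\mathbb{C}_+,\mu)$-norm of $\mathcal{L}u$ — is precisely that standard proof, including the truncation/density step needed to justify the coordinatewise computation.
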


Note that for $1<p<\infty$, we can associate the dual space of $L^p_w(0, \, \infty)$ with $L^{p'}_{w^{-p'/p}}(0, \, \infty)$, where $p':=p/(p+1)$ is the conjugate index of $p$ via the pairing
\begin{displaymath}
	\left\langle f, \, g \right\rangle = \int_0^\infty f(t)g(t) \, dt \qquad \qquad (f \in L^p_w(0, \, \infty), \, g \in L^{p'}_{w^{-p'/p}}(0, \, \infty))
\end{displaymath}
(see Remark 1.4 in \cite{haak2007} with $t^\alpha$ replaced by $w^{1/p}$), and hence the following result follows.

\begin{theorem}[Theorem 2.2 in \cite{jacob2014}]
\label{thm:2ndjacob}
Let $C$ be a bounded linear map from $\mathcal{D}(A)$ to $\mathbb{C}$. The observation operator $C$ is $L^p_w$-admissible for $(\mathbb{T}_t)_{t \geq 0}$, that is, there exists a constant $m_0>0$ such that
\begin{displaymath}
	\left\|C\mathbb{T}_. x \right\|_{L^p_w(0, \, \infty)} \defeq \left(\int_0^\infty |C\mathbb{T}_tx(t)|^p w(t)\, dt \right)^{1/p} \leq m_0 \|x\|_X \qquad (\forall x \in \mathcal{D}(A)),
\end{displaymath}
if and only if the Laplace transform induces a continuous mapping from \\ $L^{p'}_{w^{-p'/p}}(0, \, \infty)$ into $L^{q'}(\mathbb{C}_+, \, \mu)$, where $\mu$ is the measure $\sum_{k=1}^\infty |c_k|^{q'} \delta_{-\lambda_k}$, \\ $c_k := C\phi_k$, for all $k \in \mathbb{N}$, and $q':=q/(q-1)$ is the conjugate index of $q$.
\end{theorem}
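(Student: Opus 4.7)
The plan is to deduce Theorem \ref{thm:2ndjacob} from Theorem \ref{thm:1stjacob} by exploiting the duality between observation operators for $(\mathbb{T}_t)_{t \geq 0}$ and control operators for the dual semigroup $(\mathbb{T}_t^*)_{t \geq 0}$, combined with the bilinear duality between $L^p_w(0,\infty)$ and $L^{p'}_{w^{-p'/p}}(0,\infty)$ recalled immediately before the theorem.

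First I would recast the observation admissibility via the pairing. For $x \in \mathcal{D}(A)$ and $f \in L^{p'}_{w^{-p'/p}}(0,\infty)$, Fubini's theorem gives
\[
\int_0^\infty (C\mathbb{T}_t x)\, f(t)\,dt \;=\; \left\langle x,\, \int_0^\infty \mathbb{T}_t^* C^* f(t)\,dt \right\rangle_{X,\,X^*}.
\]
Taking the supremum over unit-norm $f$ on the left and invoking $(L^p_w)^* \cong L^{p'}_{w^{-p'/p}}$, the admissibility estimate $\|C\mathbb{T}_. x\|_{L^p_w} \leq m_0 \|x\|_X$ is equivalent to the existence of a constant $m_0'>0$ such that
\[
\left\| \int_0^\infty \mathbb{T}_t^* C^* f(t)\,dt \right\|_{X^*} \;\leq\; m_0' \|f\|_{L^{p'}_{w^{-p'/p}}(0,\infty)}
\]
for every $f$. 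In other words, $C^*:\mathbb{C} \to \mathcal{D}(A)'$ is $L^{p'}_{w^{-p'/p}}$-admissible as a \emph{control} operator for $(\mathbb{T}_t^*)_{t \geq 0}$.

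Next I would identify the diagonal structure of the dual system. Since the pairing used is the transpose one (no complex conjugation), the biorthogonal Schauder basis $(\phi_k^*)$ of $X^* \cong \ell^{q'}$ satisfies $\mathbb{T}_t^* \phi_k^* = e^{\lambda_k t}\phi_k^*$, so the eigenvalues are again $\lambda_k \in \mathbb{C}_-$, and the scalar control operator $C^*$ is represented by the sequence $(c_k)_{k\in\mathbb{N}}$ with $c_k = C\phi_k$. One may then apply Theorem \ref{thm:1stjacob}, with $p$, $q$, $w$ replaced by $p'$, $q'$, $w^{-p'/p}$ respectively, to conclude that $C^*$ is admissible in this sense precisely when the Laplace transform maps $L^{p'}_{w^{-p'/p}}(0,\infty)$ boundedly into $L^{q'}(\mathbb{C}_+,\mu)$ with $\mu = \sum_{k=1}^\infty |c_k|^{q'} \delta_{-\lambda_k}$, which is exactly the claim.

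The main obstacle I expect is the careful bookkeeping around the duality: one must verify that the pairing chosen really yields $(L^p_w)^* \cong L^{p'}_{w^{-p'/p}}$ isometrically (cf.\ Remark 1.4 in \cite{haak2007}), and that the dual semigroup on $\ell^{q'}$ is itself a diagonal $q'$-Riesz-basis system with the same eigenvalues $\lambda_k$, so that Theorem \ref{thm:1stjacob} is legitimately applicable. Once these identifications are in place, the argument is a purely formal transposition and no further analytic input is required.
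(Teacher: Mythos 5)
Your proposal is correct and follows essentially the same route as the paper, which derives Theorem \ref{thm:2ndjacob} from Theorem \ref{thm:1stjacob} via the control/observation duality and the identification of $(L^p_w(0,\infty))^*$ with $L^{p'}_{w^{-p'/p}}(0,\infty)$ under the bilinear pairing $\langle f,g\rangle=\int_0^\infty fg$ (the paper merely states ``hence the following result follows'' after recalling this pairing, so you have in fact supplied more of the bookkeeping than the paper does). The only point worth noting is that the argument requires $1<p<\infty$, and that the paper's formula $p'=p/(p+1)$ is a typo for $p/(p-1)$, which you have implicitly corrected.
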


So in order to test admissibility of a control operator we need to determine when the embedding
\begin{displaymath}
\mathfrak{L} : L^p_w(0, \, \infty) \longrightarrow L^q(\mathbb{C}_+, \, \mu) \qquad f \mapsto \mathfrak{L}f \defeq \int_0^\infty f(t) e^{-t \cdot} \, dt
\end{displaymath}
is bounded. Or, in other words, whether there exists a constant $C>0$ such that
\begin{displaymath}
	\left(\int_{\mathbb{C}_+} \left|\mathfrak{L}f\right|^q \, d\mu\right)^{p/q} \leq C \int_0^\infty |f(t)|^p w(t) \, dt \qquad \qquad (\forall f \in L^p_w(0, \, \infty)).
\end{displaymath}
If $p=q$ and this embedding is indeed bounded, then we shall refer to $\mu$ as a \emph{Carleson measure} for $\mathfrak{L}(L^p_w(0, \, \infty))$ (a space which we equip with the $L^p_w$ norm). Because the observation operator version of this problem is analogous, from now on we shall only state our results for control operators, leaving the observation operator case to be derived from duality by an interested reader.

\section{Carleson measures for Hilbert spaces of analytic functions on $\mathbb{C}_+$}
\label{sec:kernels}
Throughout this section the operator $B$ and the measure $\mu$ will be as defined in Section \ref{sec:admissibility}. We begin by considering the most elementary case, that is when $p~=~q~=~2$. Then $\mathfrak{L}(L^2_w(0, \, \infty))$, equipped with the $L^2_w$ inner product, is a Hilbert space of analytic functions. If for all $z \in \mathbb{C}_+$, $e^{-t\overline{z}}/w(t)$ belongs to $L^p_w(0, \, \infty)$, we can easily verify that it is also a reproducing kernel Hilbert space:
\begin{displaymath}
	\mathfrak{L}f(z) \defeq \int_0^\infty f(t) e^{-tz} \, dt = \int_0^\infty f(t) \overline{\frac{e^{-t\overline{z}}}{w(t)}} w(t) \, dt \defeq \left\langle \mathfrak{L}f, \, \mathfrak{L}\frac{e^{-t\overline{z}}}{w(t)}\right\rangle_{\mathfrak{L}(L^2_w(0, \, \infty))},
\end{displaymath}
for each $f \in L^2_w(0, \, \infty)$. Suppose that for each  $(z, \, \zeta) \in \mathbb{C}_+^2$, $k_z(\zeta)$ is the reproducing kernel of $\mathfrak{L}(L^2_w(0, \, \infty))$. In this case Lemma 24 from \cite{arcozzi2008} can be rephrased as the following proposition

\begin{prop}
The control operator $B$ is $L^2_w$-admissible if and only if the linear map
\begin{displaymath}
	f \mapsto \int_{\mathbb{C}_+} \re(k_z(\cdot)) f(z) \, d\mu(z) = \sum_{k=1}^\infty |b_k|^2 \re(k_{-\lambda_k}(\cdot))f(-\lambda_k)
\end{displaymath}
is bounded on $L^2(\mathbb{C}_+, \, \mu)$.
\label{prop:arcozziprop}
\end{prop}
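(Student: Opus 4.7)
I would begin by applying Theorem \ref{thm:1stjacob} with $p=q=2$, which identifies $L^2_w$-admissibility of $B$ with the boundedness of the Laplace--Carleson embedding $\mathfrak{L} : L^2_w(0, \infty) \to L^2(\mathbb{C}_+, \mu)$. Since $\mathfrak{L}$ is a unitary isomorphism of $L^2_w(0, \infty)$ onto the Hilbert space $\mathcal{H} := \mathfrak{L}(L^2_w(0, \infty))$ (equipped with the pushed-forward inner product), this in turn is equivalent to the boundedness of the canonical inclusion $J : \mathcal{H} \hookrightarrow L^2(\mathbb{C}_+, \mu)$. Invoking the reproducing property $F(z) = \langle F, k_z\rangle_{\mathcal{H}}$, a routine computation gives the adjoint $(J^\ast g)(\zeta) = \int_{\mathbb{C}_+} g(z)\, k_z(\zeta)\, d\mu(z)$, and the identity $\|J\|^2 = \|JJ^\ast\|$ reduces the problem to the boundedness on $L^2(\mathbb{C}_+, \mu)$ of the positive self-adjoint integral operator
$$T_\mu g(\zeta) := \int_{\mathbb{C}_+} k_z(\zeta)\, g(z)\, d\mu(z).$$

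The remaining task is to replace $T_\mu$, whose Hermitian kernel $k_z(\zeta)$ is in general complex-valued, by the operator $S_\mu$ of the statement, whose kernel is $\re k_z(\zeta)$. Writing $T_\mu = S_\mu + iR_\mu$, where $R_\mu$ has the real antisymmetric kernel $\im k_z(\zeta)$, I note first that antisymmetry forces $\langle R_\mu g, g\rangle = 0$ for real-valued $g$, so $\langle S_\mu g, g\rangle = \langle T_\mu g, g\rangle$ whenever $g$ is real; since $S_\mu$ has a real kernel it commutes with complex conjugation, so its norm on complex $L^2(\mu)$ coincides with its norm on real $L^2(\mu)$, which yields $\|S_\mu\| \leq \|T_\mu\|$. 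For the converse --- the main technical obstacle --- I would exploit the positivity $T_\mu = JJ^\ast \geq 0$: applying Cauchy--Schwarz for the positive semi-definite form $(x, y) \mapsto \langle T_\mu x, y\rangle$ to the pair $x = v$, $y = iu$ with $u, v$ real, direct computation gives
$$|\langle S_\mu v, u\rangle|^2 + |\langle R_\mu v, u\rangle|^2 \;=\; |\langle T_\mu v, iu\rangle|^2 \;\leq\; \langle S_\mu v, v\rangle \langle S_\mu u, u\rangle \;\leq\; \|S_\mu\|^2 \|u\|^2 \|v\|^2,$$
whence $\|R_\mu\| \leq \|S_\mu\|$ and therefore $\|T_\mu\| \leq 2\|S_\mu\|$.

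Combining these two paragraphs, $L^2_w$-admissibility of $B$ is equivalent to the boundedness of $S_\mu$ on $L^2(\mathbb{C}_+, \mu)$, which is precisely the operator in the statement. Expanding the integral against the atomic measure $\mu = \sum_{k=1}^\infty |b_k|^2 \delta_{-\lambda_k}$ then yields the displayed series form. The only non-routine step is the positivity-based norm comparison between $T_\mu$ and its ``real part'' $S_\mu$; everything else reduces to Theorem \ref{thm:1stjacob} together with the standard reproducing-kernel identification $\|J\|^2 = \|JJ^\ast\|$.
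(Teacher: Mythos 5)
Your argument is correct, and it is essentially a self-contained reconstruction of the proof of the result the paper merely cites: the paper gives no proof of this proposition at all, deferring entirely to Lemma 24 of \cite{arcozzi2008}, and the chain you describe (admissibility $\Leftrightarrow$ boundedness of $\mathfrak{L}$ via Theorem \ref{thm:1stjacob} with $p=q=2$, then $\|J\|^2=\|JJ^*\|$ with $JJ^*=T_\mu$, then the comparison $\|S_\mu\|\leq\|T_\mu\|\leq 2\|S_\mu\|$ using the Hermitian symmetry of $k_z(\zeta)$ and Cauchy--Schwarz for the positive form $\langle T_\mu\cdot,\cdot\rangle$) is exactly the mechanism behind that lemma. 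The one point you should make explicit is a domain issue: in the direction where $S_\mu$ is assumed bounded, $T_\mu$ is not yet known to be bounded, so the positivity $\langle T_\mu x,x\rangle=\|J^*x\|^2_{\mathcal{H}}\geq 0$ and the ensuing Cauchy--Schwarz inequality must be run on a dense class where everything is a finite sum --- here this is harmless because $\mu=\sum_k|b_k|^2\delta_{-\lambda_k}$ is atomic, so finitely supported elements of $L^2(\mathbb{C}_+,\mu)$ give $J^*x$ as a finite linear combination of kernels, and a uniform bound on the quadratic forms over this class suffices to conclude boundedness. With that caveat addressed, your norm comparison between $T_\mu$ and its real-part kernel operator is sound, and the displayed series form follows by evaluating the integral against the atoms of $\mu$.
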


\begin{corollary} ~
\begin{enumerate}
	\item If
	\begin{equation}
		\sum_{k=1}^\infty \sum_{l=1}^\infty \left| b_k b_l\re \left( k_{-\lambda_k} (-\lambda_l) \right) \right|^2 < \infty,
		\label{eq:strongercondition}
	\end{equation}
	then $B$ is $L^2_w$-admissible.
	\item If $B$ is $L^2_w$-admissible, then there exists $C>0$ such that
		\begin{displaymath}
		 \sum_{k \in \Gamma} \sum_{l \in \Gamma} \left|b_k b_l\re\left(k_{-\lambda_k}(-\lambda_l) \right)\right|^2 \leq C \sum_{n \in \Gamma} |b_n|^2 \qquad \qquad 	(\forall \Gamma \subset \mathbb{N}).
	\end{displaymath}
\end{enumerate}
\end{corollary}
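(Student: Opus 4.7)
The plan is to deduce both statements from Proposition~\ref{prop:arcozziprop} by rewriting the operator there in matrix form on a natural orthonormal basis of $L^2(\mathbb{C}_+,\mu)$. Since $\mu$ is purely atomic with $\mu(\{-\lambda_k\}) = |b_k|^2$, the vectors $e_k := \chi_{\{-\lambda_k\}}/|b_k|$ form an orthonormal basis of $L^2(\mathbb{C}_+,\mu)$, and a direct computation of $\langle Te_k, e_l\rangle_{L^2(\mu)}$ (with $T$ the operator appearing in Proposition~\ref{prop:arcozziprop}) yields the matrix entries
\begin{displaymath}
M_{lk} \;=\; |b_k|\,|b_l|\,\re\bigl(k_{-\lambda_k}(-\lambda_l)\bigr).
\end{displaymath}
Note that $M$ is real-symmetric (since $k_z(\zeta) = \overline{k_\zeta(z)}$ forces $\re(k_{-\lambda_k}(-\lambda_l)) = \re(k_{-\lambda_l}(-\lambda_k))$) and is positive semidefinite, being the entrywise real part of the Gram matrix of the vectors $(|b_k|k_{-\lambda_k})_k$. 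Hence Proposition~\ref{prop:arcozziprop} asserts that $B$ is $L^2_w$-admissible if and only if $M$ acts boundedly on $\ell^2$.

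For part~(1), the left-hand side of \eqref{eq:strongercondition} is precisely the squared Hilbert--Schmidt norm $\|M\|_{HS}^2 = \sum_{k,l}|M_{lk}|^2$. Since every Hilbert--Schmidt operator on $\ell^2$ is bounded, the hypothesis gives $\|M\|_{\mathrm{op}} \leq \|M\|_{HS} < \infty$, and part~(1) is immediate from Proposition~\ref{prop:arcozziprop}.

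For part~(2), assume $B$ is admissible, so that $M$ is a bounded positive-semidefinite operator on $\ell^2$. Denote by $P_\Gamma$ the orthogonal projection onto $\mathrm{span}\{e_k : k\in\Gamma\}$; then the left-hand side of the claimed inequality is exactly $\|P_\Gamma M P_\Gamma\|_{HS}^2$. The key ingredient will be the trace inequality for bounded PSD operators: for any such $A$, spectral calculus gives $A^2 \leq \|A\|_{\mathrm{op}}\,A$, whence
\begin{displaymath}
\|A\|_{HS}^2 \;=\; \mathrm{tr}(A^2) \;\leq\; \|A\|_{\mathrm{op}}\,\mathrm{tr}(A).
\end{displaymath}
Applying this to $A = P_\Gamma M P_\Gamma$, whose operator norm is at most $\|M\|_{\mathrm{op}}$ and whose trace is $\sum_{k\in\Gamma}M_{kk} = \sum_{k\in\Gamma}|b_k|^2\re(k_{-\lambda_k}(-\lambda_k))$, will produce a bound proportional to $\sum_{k\in\Gamma}|b_k|^2$ once the diagonal factors $\re(k_{-\lambda_k}(-\lambda_k))$ are absorbed into the constant.

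The main obstacle is the last step of part~(2): one must bound $\re(k_{-\lambda_k}(-\lambda_k))$ uniformly in $k$, or else verify that the ``effective'' trace $\sum_{k\in\Gamma}|b_k|^2 \re(k_{-\lambda_k}(-\lambda_k))$ is controlled by $\sum_{n\in\Gamma}|b_n|^2$ under admissibility. A naive operator-norm argument alone would produce a bound of the shape $|\Gamma|\,\|M\|_{\mathrm{op}}^2$ rather than the weighted bound $C\sum_{n\in\Gamma}|b_n|^2$, so it is precisely the PSD structure of $M$ together with the reproducing-kernel identity for the diagonal that allows the correct weighting to be recovered.
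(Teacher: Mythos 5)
Your part~(1) is correct and is essentially the paper's own argument in matrix form: the H\"older/Cauchy--Schwarz estimate the paper applies to the integral operator of Proposition~\ref{prop:arcozziprop} is precisely the statement that its Hilbert--Schmidt norm dominates its operator norm, and your matrix entries $M_{lk}=|b_k|\,|b_l|\,\re(k_{-\lambda_k}(-\lambda_l))$ are computed correctly.

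Part~(2), however, is not a proof: you reduce the claim to $\sum_{k\in\Gamma}|b_k|^2\re\left(k_{-\lambda_k}(-\lambda_k)\right)\leq C\sum_{k\in\Gamma}|b_k|^2$ and then explicitly leave open how to obtain it. That gap cannot be closed in general. Admissibility only gives $|b_k|^2\,\|k_{-\lambda_k}\|^2\leq C$ (test the embedding on $k_{-\lambda_k}$), not $\|k_{-\lambda_k}\|^2\leq C$; for the Hardy space $H^2(\mathbb{C}_+)$, where $\re(k_z(z))\asymp 1/\re(z)$, the admissible choice $-\lambda_k=4^{-k}$, $|b_k|^2=4^{-k}$ makes the single diagonal term $\left|b_k^2\re\left(k_{-\lambda_k}(-\lambda_k)\right)\right|^2$ bounded below while $C\,|b_k|^2\to 0$, so the inequality with the outer square cannot hold with one constant unless one additionally assumes $\sup_k\|k_{-\lambda_k}\|<\infty$ (as in the Banach-algebra setting of \eqref{eq:kernelcondition}). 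The paper's proof takes a different route: following the proof of Lemma~26 in \cite{arcozzi2008}, it tests the bilinear form of the operator of Proposition~\ref{prop:arcozziprop} on the characteristic function of $\Omega=\{-\lambda_k\}_{k\in\Gamma}$ --- in your notation, $\langle Mx_\Gamma,x_\Gamma\rangle\leq\|M\|_{\mathrm{op}}\|x_\Gamma\|^2$ with $x_\Gamma=\sum_{k\in\Gamma}|b_k|e_k$ --- which yields the first-power estimate $\sum_{k,l\in\Gamma}|b_k|^2|b_l|^2\re\left(k_{-\lambda_k}(-\lambda_l)\right)\leq C\sum_{n\in\Gamma}|b_n|^2$ rather than the sum of squares. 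Your trace inequality is valid as far as it goes, but the quantity it leaves you to control is genuinely not controlled by $\sum_{n\in\Gamma}|b_n|^2$; the correct move is to test the quadratic form on $x_\Gamma$ and obtain (only) the first-power inequality.
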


\begin{proof}
To prove 1. we notice that by H\"{o}lder's inequality
\begin{align*}
	\int_{\mathbb{C}_+} &\left| \int_{\mathbb{C}_+} \re\left(k_z(\zeta) \right) G(z) \, d\mu(z) \right|^2 \, d\mu(\zeta) \\
	&\leq \left(\int_{\mathbb{C}_+} |G|^2 \, d\mu\right)\left( \int_{\mathbb{C}_+} \int_{\mathbb{C}_+} \left|\re\left(k_z(\zeta) \right)\right|^2 \, d\mu(z) d\mu(\zeta)\right),
\end{align*}
for all $G \in L^2(\mathbb{C}_+, \, \mu)$, and the result follows from the previous proposition. Also, by the proof of Lemma 26 from \cite{arcozzi2008}, we have
\begin{displaymath}
	\int_{\mathbb{C}_+} \int_{\mathbb{C}_+} \re\left(k_z\right) G(z) \overline{G(\zeta)} \, d\mu(z) d\mu(\zeta) \leq C(\mu) \int_{\mathbb{C}_+} |G|^2 \, d\mu, \qquad(\forall G \in L^2(\mathbb{C}_+, \, \mu))
\end{displaymath}
And then we apply it to $G=\chi_\Omega$, the characteristic function of $\Omega=\{-\lambda_k\}_{k \in \Gamma}$.
\end{proof}

If $\mathfrak{L}(L^2_w(0, \, \infty))$ is a Banach algebra with respect to the pointwise multiplication (for example $\mathfrak{L}(L^2_{1+t^2}(0, \, \infty))$, then, by Theorem 3 from \cite{kucik2016}, we know that $\mathfrak{L}(L^2_w(0, \, \infty))$ must also be a reproducing kernel Hilbert space (with kernel $k_z$, say) and
\begin{equation}
	\sup_{z \in \mathbb{C}_+} \|k_z\|_{\mathfrak{L}(L^2_w(0, \, \infty))} \leq 1.
	\label{eq:kernelcondition}
\end{equation}

\begin{prop}
Suppose that $\mathfrak{L}(L^2_w(0, \, \infty))$ is a Banach algebra with respect to the pointwise multiplication. If $(b_k)_{k=1}^\infty \in \ell^2$, then $B$ is $L^2_w$-admissible
\end{prop}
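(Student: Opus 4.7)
The plan is to combine the reproducing kernel estimate that comes free of charge from the Banach algebra hypothesis with part~1 of the preceding corollary, so that the whole proof reduces to a one‑line double‑sum estimate.

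First, since $\mathfrak{L}(L^2_w(0,\infty))$ is assumed to be a Banach algebra under pointwise multiplication, the cited Theorem~3 from \cite{kucik2016} guarantees that it is a reproducing kernel Hilbert space whose kernel $k_z$ satisfies the uniform bound \eqref{eq:kernelcondition}, namely
\[
\sup_{z\in\mathbb{C}_+} \|k_z\|_{\mathfrak{L}(L^2_w(0,\infty))} \leq 1.
\]
Applying the reproducing property together with the Cauchy--Schwarz inequality in $\mathfrak{L}(L^2_w(0,\infty))$ then yields, for any $k,\,l\in\mathbb{N}$,
\[
\bigl|k_{-\lambda_k}(-\lambda_l)\bigr|
= \bigl|\langle k_{-\lambda_k},\,k_{-\lambda_l}\rangle\bigr|
\leq \|k_{-\lambda_k}\|\,\|k_{-\lambda_l}\| \leq 1,
\]
so in particular $\bigl|\re\bigl(k_{-\lambda_k}(-\lambda_l)\bigr)\bigr|\leq 1$.

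Now I would invoke part~1 of Corollary~1: it suffices to check that the double sum in \eqref{eq:strongercondition} converges. Using the kernel bound just established,
\[
\sum_{k=1}^\infty \sum_{l=1}^\infty \bigl| b_k b_l\re\bigl( k_{-\lambda_k} (-\lambda_l) \bigr) \bigr|^2
\leq \sum_{k=1}^\infty \sum_{l=1}^\infty |b_k|^2 |b_l|^2
= \|(b_k)\|_{\ell^2}^4 < \infty,
\]
because $(b_k)\in\ell^2$ by assumption. By the corollary, this is enough to conclude that $B$ is $L^2_w$-admissible.

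There is essentially no single difficult step in this argument: the real substance lies in the two cited results (the Arcozzi‑type Proposition~1 together with its corollary, and Theorem~3 of \cite{kucik2016} that upgrades a Laplace‑transform Banach algebra to a contractively‑normalised reproducing kernel Hilbert space). The only thing one must be slightly careful about is the direction of the Cauchy--Schwarz estimate on $k_{-\lambda_k}(-\lambda_l)$, which requires identifying $k_z(\zeta)$ as the inner product $\langle k_z,k_\zeta\rangle$ using the reproducing property displayed just before the proposition.
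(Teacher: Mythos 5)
Your proof is correct and is essentially the same as the paper's: both arguments combine the contractive kernel bound \eqref{eq:kernelcondition} (from Theorem~3 of \cite{kucik2016}) with the Cauchy--Schwarz identification $k_z(\zeta)=\langle k_z,k_\zeta\rangle$ to bound the double sum in \eqref{eq:strongercondition} by $\bigl(\sum_k |b_k|^2\bigr)^2$. The only cosmetic difference is that the paper writes the double sum as a double integral against $\mu\otimes\mu$ before applying the same estimates.
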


\begin{proof}
By the Cauchy--Schwarz inequality we get
\begin{align*}
\sum_{k=1}^\infty \sum_{l=1}^\infty \left| b_k b_l \re \left( k_{-\lambda_k} (-\lambda_l) \right) \right|^2 &\defeq \int_{\mathbb{C}_+} \int_{\mathbb{C}_+} \left|\re\left(k_z(\zeta) \right)\right|^2 \, d\mu(z) d\mu(\zeta) \\
&\leq \int_{\mathbb{C}_+} \int_{\mathbb{C}_+} \left|k_z(\zeta) \right|^2 \, d\mu(z) d\mu(\zeta) \\
&\leq \int_{\mathbb{C}_+} \int_{\mathbb{C}_+} \left\|k_z \right\|^2_{\mathfrak{L}(L^2_w(0, \, \infty))} \left\|k_\zeta \right\|^2_{\mathfrak{L}(L^2_w(0, \, \infty))} d\mu(z) d\mu(\zeta) \\
&\leq \left(\int_{\mathbb{C}_+} \left\|k_z \right\|^2_{\mathfrak{L}(L^2_w(0, \, \infty))} \, d\mu(z)\right)^2 \\
&\leq \left(\sup_{z \in \mathbb{C}_+} \left\|k_z \right\|^2_{\mathfrak{L}(L^2_w(0, \, \infty))} \int_{\mathbb{C}_+} \, d\mu\right)^2 \\
&\leq \left(\sum_{k=1}^\infty |b_k|^2 \right)^2 < \infty,
\end{align*}
and the result follows from the previous corollary.
\end{proof}
More examples of $\mathfrak{L}(L^2_w(0, \, \infty))$ can be easily produced from criteria given for example in \cite{kucik2016}, \cite{kuznetsova2006} or \cite{nikolskii1970}.

Let us now consider another type of spaces of analytic functions on $\mathbb{C}_+$. Let $\tilde{\nu}$ be a positive regular Borel measure on $[0, \, \infty)$ satisfying so-called \emph{$\Delta_2$-condition}:
\begin{equation}
	\sup_{r>0} \frac{\tilde{\nu}[0, \, 2r)}{\tilde{\nu}[0, \, r)}  < \infty,
		\tag{$\Delta_2$}
	\label{eq:delta2}
\end{equation}
and let $\lambda$ denote the Lebesgue measure on $i\mathbb{R}$. We define $\nu$ to be the positive regular Borel measure $\tilde{\nu} \otimes \lambda$ on $\overline{\mathbb{C}_+} := [0, \, \infty) \times i\mathbb{R}$. For this measure and $1 \leq p < \infty$, a \emph{Zen space} (see \cite{jacob2013}) is defined to be:
\begin{displaymath}
	A^p_\nu := \left\{F : \mathbb{C}_+ \longrightarrow \mathbb{C} \, \text{analytic} \; : \; \left\|F\right\|^p_{A^p_\nu} := \sup_{\varepsilon >0} \int_{\mathbb{C}_+} |F(z+\varepsilon)|^p \, d\nu < \infty \right\}.
\end{displaymath}
The Zen spaces generalise Hardy spaces on $\mathbb{C}_+$ (these correspond to $\tilde{\nu} = \frac{1}{2\pi}\delta_0$) and weighted Bergman spaces (corresponding to $d\tilde{\nu}=r^\alpha dr, \, \alpha >-1$). 
If $p=2$, then Zen spaces are Hilbert spaces (see \cite{peloso2016}), and in fact a reproducing kernel Hilbert spaces (see \cite{kucik2016}). In \cite{jacob2013} it was proved that the Laplace transform defines an isometric map
\begin{displaymath}
	\mathfrak{L} : L^2_w(0, \, \infty) \longrightarrow A^2_\nu,
\end{displaymath}
where $w$ is given by
\begin{displaymath}
	w(t) := 2\pi \int_0^\infty e^{-2rt} \, d\tilde{\nu}(r) \qquad \qquad (t > 0).
\end{displaymath}
The article \cite{jacob2013} also contains a full characterisation of Carleson measures for Zen spaces, which was also presented in terms of admissibility in \cite{jacob2014}. In \cite{kucik2016} a generalisation of Zen spaces was defined, namely
\footnotesize
\begin{displaymath}
	A^p\left( \mathbb{C}_+, \, (\nu_n)_{n=0}^m \right) := \left\{F : \mathbb{C}_+ \longrightarrow \mathbb{C} \, \text{analytic} \, : \, \left\|F\right\|^p_{A^p\left( \mathbb{C}_+, \, (\nu_n)_{n=0}^m \right)} := \sum_{n=0}^m \left\|F^{(n)}\right\|^p_{A^p_{\nu_n}} < \infty \right\},
\end{displaymath}
\normalsize
where each $\nu_n=\tilde{\nu}_n \otimes \lambda$, and $\tilde{\nu}_n$ is defined as $\nu$ above, and $0\leq m \leq \infty$. It is also proved there that if $p=2$, then the Laplace transform again defines an isometric map
\begin{displaymath}
	\mathfrak{L} : L^2_{w_{(m)}}(0, \, \infty) \longrightarrow A^2\left( \mathbb{C}_+, \, (\nu_n)_{n=0}^m \right),
\end{displaymath}
where $w$ is given by
\begin{displaymath}
	w_{(m)}(t) := 2\pi \sum_{n=0}^m t^{2n} \int_0^\infty e^{-2rt} \, d\tilde{\nu}_n(r)  \qquad \qquad (t >0).
\end{displaymath}
The image of $L^2_{w_{(m)}}(0, \, \infty)$ is denoted by $A^2_{(m)}$. This is a large class of spaces, containing for example Hardy and weighted Bergman spaces mentioned earlier, but also the Dirichlet space on $\mathbb{C}_+$ (when we have $\tilde{\nu_0}=\frac{1}{2\pi}\delta_0$ and $\tilde{\nu}_1$ being the Lebesgue measure with the weight $1/\pi$) which has not been studied often in the complex half-plane context before. And our problem of determining admissibility of control or observation operators is reduced to the characterisation of Carleson measures for $A^2_{(m)}$, allowing us to consider $L^2_w$-admissibility for non-decreasing weights, which were not included in the Zen space context. This has been partially done in \cite{kucik2017} and we aim to extend the results obtained there to the non-Hilbertian case of $A^p\left( \mathbb{C}_+, \, (\nu_n)_{n=0}^m \right)$ in the next section.

\section{$A^p(\mathbb{C}_+, \, (\nu_n)_{n=0}^m) \hookrightarrow L^q(\mathbb{C}_+, \, \mu)$ embeddings}
\label{sec:Ap}

The boundedness of canonical embeddings into $L^q(\mathbb{C}_+, \, \mu)$ (in this context also called \emph{Carleson embeddings}), for some Borel measure $\mu$, and characterisations of Carleson measures is very often given in terms of \emph{Carleson squares} (sometimes called Carleson boxes). On the half-plane these are defined as follows. In this section we prove general version of Theorems 2 and 4 from \cite{kucik2017}. Note that for $p=2$ these can be used to describe corresponding $L^2_{w_{(m)}}$-admissibility. This is left for the interested reader.

\begin{definition}
Let $a \in \mathbb{C}_+$. A \emph{Carleson square centred at $a$} is defined to be the set
\begin{equation}
Q(a) := \left\{z = x+iy \; : \; 0 \leq x < 2\re(a), \, |y-\im(a)| \leq \re(a)\right\}.
\label{eq:carlesonsquare}
\end{equation}
\end{definition}

\begin{theorem}
Suppose that $m < \infty$. If the embedding 
\begin{displaymath}
	A^p(\mathbb{C}_+, \, (\nu_n)_{n=0}^m) \hookrightarrow L^q(\mathbb{C}_+, \, \mu)
\end{displaymath}
is bounded, then there exists a constant $C(\mu)>0$, such that
\begin{equation}
	\mu(Q(a)) \leq C(\mu) \left[\sum_{n=0}^m \frac{\nu_n\left(\overline{Q(a)}\right)}{(\re(a))^{np}}\right]^{\frac{q}{p}},
	\label{eq:necessarycarlesonforAp}
\end{equation}
for each Carleson square $Q(a)$.
\label{thm:necessary}
\end{theorem}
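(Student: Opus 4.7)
The plan is the classical Carleson-square testing argument. For fixed $a \in \mathbb{C}_+$ I would introduce the auxiliary analytic function
\[
F_a(z) := (z+\bar a)^{-\alpha},
\]
with exponent $\alpha>0$ to be chosen large enough; its $n$-th derivative is a scalar multiple of $(z+\bar a)^{-\alpha-n}$. Since $\bar a \in \mathbb{C}_+$, $F_a$ is analytic on $\mathbb{C}_+$, and for $z \in Q(a)$ a direct computation gives $\re(z+\bar a)\in[\re a, 3\re a]$ and $|\im(z+\bar a)|\leq \re a$, so $|F_a(z)|\gtrsim (\re a)^{-\alpha}$ throughout $Q(a)$. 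Feeding $F_a$ into the hypothesised embedding then yields
\[
(\re a)^{-\alpha q}\mu(Q(a)) \lesssim \int_{Q(a)}|F_a|^q\,d\mu \leq C\bigl(\|F_a\|^p_{A^p(\mathbb{C}_+,(\nu_n)_{n=0}^m)}\bigr)^{q/p},
\]
so the theorem is reduced to establishing
\[
\|F_a\|^p_{A^p(\mathbb{C}_+,(\nu_n)_{n=0}^m)} \lesssim (\re a)^{-\alpha p}\sum_{n=0}^m \frac{\nu_n(\overline{Q(a)})}{(\re a)^{np}}.
\]

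The heart of the argument is a dyadic estimate for $I_n := \int_{\mathbb{C}_+}|z+\bar a|^{-(\alpha+n)p}\,d\nu_n(z)$. Setting $Q^{(k)}(a) := Q\bigl(2^k\re(a)+i\im(a)\bigr)$ for $k\geq 0$, one has $Q^{(0)}(a) = Q(a)$, the nested family $\{Q^{(k)}(a)\}_{k\geq 0}$ exhausts $\mathbb{C}_+$, and on the annulus $Q^{(k)}(a)\setminus Q^{(k-1)}(a)$ the quantity $|z+\bar a|$ is bounded below by a fixed multiple of $2^{k}\re(a)$. Iterating the $\Delta_2$-condition for $\tilde\nu_n$ and accounting for the linear Lebesgue factor in the imaginary direction yields
\[
\nu_n\bigl(Q^{(k)}(a)\bigr) \leq (2K_n)^k\,\nu_n\bigl(\overline{Q(a)}\bigr),
\]
where $K_n$ denotes the $\Delta_2$-constant of $\tilde\nu_n$. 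Summing the annular contributions gives a geometric series
\[
I_n \leq C\,(\re a)^{-(\alpha+n)p}\,\nu_n\bigl(\overline{Q(a)}\bigr)\sum_{k\geq 0}\bigl(2K_n\cdot 2^{-(\alpha+n)p}\bigr)^k,
\]
which converges precisely when $(\alpha+n)p > 1+\log_2 K_n$.

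The assumption $m<\infty$ enters exactly here: a single $\alpha$ can be chosen large enough to meet the convergence criterion simultaneously for every $n\in\{0,1,\dots,m\}$. With such an $\alpha$ fixed, summing over $n$, absorbing the binomial-derivative constants into $C$, and substituting back into the embedding inequality produces \eqref{eq:necessarycarlesonforAp} after the common factor $(\re a)^{-\alpha q}$ cancels. The main technical obstacle is the careful dyadic bookkeeping---balancing the horizontal $\Delta_2$-growth against the linear vertical Lebesgue contribution---and verifying that the uniform choice of $\alpha$ is possible only because $m$ is finite; for the case $m=\infty$ one would need additional summability of the $\Delta_2$-constants, which is not assumed.
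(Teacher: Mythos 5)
Your proposal is correct and follows essentially the same route as the paper's proof: the test function $(z+\overline{a})^{-\gamma}$ with $\gamma$ chosen large relative to the $\Delta_2$-constants, the lower bound $|z+\overline{a}|\leq\sqrt{10}\,\re(a)$ on $Q(a)$, the dyadic decomposition into squares $Q(2^{k}\re(a)+i\im(a))$ with the iterated $\Delta_2$-estimate $\nu_n\bigl(Q(2^{k+1}\re(a)+i\im(a))\bigr)\leq(2R_n)^{k+1}\nu_n\bigl(\overline{Q(a)}\bigr)$, and the resulting geometric series whose convergence for all $n\leq m$ simultaneously is exactly where $m<\infty$ is used. Your convergence criterion $(\alpha+n)p>1+\log_2 K_n$ matches the paper's choice $\gamma>\sup_n(\log_2 R_n-np+1)/p$, so nothing further is needed.
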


\begin{proof}
Let $a \in \mathbb{C}_+$, and choose $\gamma > \sup_{0\leq n \leq m}(\log_2 R_n -np +1)/p$, where $R_n$ denotes the supremum obtained from the \eqref{eq:delta2}-condition for each $\tilde{\nu}_n, \, 0~\leq~n~\leq~m$.  Then for all $z$ in $Q(a)$ we have $|z+\overline{a}| \leq \sqrt{10}\re(a)$, and hence
\begin{equation}
	\frac{\mu\left(Q(a)\right)}{(\sqrt{10}\re(a))^{\gamma q}} \leq \int_{\mathbb{C}_+} \frac{d\mu(z)}{|z+\overline{a}|^{\gamma q}}.
\label{eq:muestimate}
\end{equation}
Similarly
\begin{displaymath}
	|z+\overline{a}| \geq \sqrt{\re(a)^2} = \re(a) > \frac{\re(a)}{2} \qquad \qquad (\forall z \in Q(a)).
\end{displaymath}
Also, given $k \in \mathbb{N}_0$, for all $z \in Q(2^{k+1}\re(a)+i\im(a)) \setminus Q(2^k(\re)+i\im(a))$, with $0 < \re(z) \leq 2^{k+1}\re(a)$ we have
\begin{displaymath}
	|z+\overline{a}| \geq \sqrt{\re(a)^2+ (2^k\re(a))^2} \geq 2^k \re(a),
\end{displaymath}
and even if $2^{k+1}\re(a) < \re(z) \leq 2^{k+2}\re(a)$, we also have
\begin{displaymath}
	|z+\overline{a}| \geq \sqrt{(2^{k+1}\re(a)+\re(a))^2} \geq 2^k \re(a).
\end{displaymath}
And
\begin{align}
\begin{split}
	\nu_n&\left(Q(2^{k+1}\re(a)+i\im(a)) \setminus Q(2^k(\re)+i\im(a))\right) \leq \nu_n\left(Q(2^{k+1}\re(a)+i\im(a))\right) \\
	&\leq \tilde{\nu}_n \left[0, \, 2^{k+2}\re(a)\right) \cdot 2^{k+1}\re(a) \stackrel{\eqref{eq:delta2}}{\leq} \left(2R_n\right)^{k+1} \tilde{\nu}_n [0, \, 2\re(a)) \cdot 2\re(a) \\
	&\leq \left(2R_n\right)^{k+1} \nu_n\left(\overline{Q(a)}\right),
\end{split}
\label{eq:delta2fornu}
\end{align}
so
\begin{align*}
	\int_{\mathbb{C}_+} \frac{d\nu_n(z)}{|z + \overline{a}|^{(\gamma+n)p}} &\leq \left(\frac{2}{\re(a)}\right)^{(\gamma+n)p} \nu_n\left(Q(a)\right) \\
	&\qquad + \sum_{k=0}^\infty \frac{\nu_n\left(Q(2^{k+1}\re(a)+i\im(a)) \setminus Q(2^k(\re(a))+i\im(a))\right)}{(2^k\re(a))^{(\gamma+n)p}} \\
	&\stackrel{\eqref{eq:delta2fornu}}{\leq} \left(\frac{2}{\re(a)}\right)^{(\gamma+n)p} \nu_n\left(\overline{Q(a)}\right) \left(1+\sum_{k=0}^\infty \frac{(2R_n)^{k+1}}{2^{(k+1)(\gamma+n)p}}\right) \\
	&\leq \left(\frac{2}{\re(a)}\right)^{(\gamma+n)p} \nu_n\left(\overline{Q(a)}\right) \sum_{k=0}^\infty \left(\frac{R_n}{2^{(\gamma+n)p-1}}\right)^k
\end{align*}
and the sum converges for all $0 \leq n \leq m$. Hence $(z+\overline{a})^{-\gamma} \in A^p(\mathbb{C}_+, \, (\nu_n)_{n=0}^m)$. Now, if the embedding is bounded, with constant $C'(\mu)>0$ say, then
\begin{align*}
\mu(Q(a)) &\stackrel{\eqref{eq:muestimate}}{\leq} (\sqrt{10}\re(a))^{\gamma q} \int_{\mathbb{C}_+} \frac{d\mu(z)}{\left|z+\overline{a}\right|^{\gamma q}} \\
&\leq C'(\mu)(\sqrt{10}\re(a))^{\gamma q} \left[\sum_{n=0}^m \int_{\mathbb{C}_+} \frac{d\nu_n(z)}{\left|\left[\left(z+\overline{a}\right)^\gamma \right]^{(n)}\right|^p}\right]^{\frac{q}{p}} \\
&\leq C'(\mu)(\sqrt{10}\re(a))^{\gamma q} \left[\sum_{n=0}^m \left(\prod_{l=1}^n(\gamma+l-1) \right)\int_{\mathbb{C}_+} \frac{d\nu_n(z)}{|z+\overline{a}|^{(\gamma+n)p}}\right]^{\frac{q}{p}} \\
&\leq C(\mu) \left[\sum_{n=0}^m \frac{\nu_n\left(\overline{Q(a)}\right)}{(\re(a))^{np}}\right]^{\frac{q}{p}},
\end{align*}
where
\begin{displaymath}
C(\mu) := 2^{q(n+3\gamma/2)}5^{\gamma q/2} \left[\left(\prod_{l=0}^m (\gamma+l-1)\right) \max_{0\leq n \leq m} \sum_{k=0}^\infty \left(\frac{R_n}{2^{(\gamma+n)p-1}}\right)^k\right]^{\frac{q}{p}} \, C'(\mu),
\end{displaymath}
(and we adopted the convention that the product $\prod(\gamma+l-1)$ is defined to be 1, if the lower limit is a bigger number than the upper limit).
\end{proof}

\begin{remark}
For $p=q$ and $m=0$ (i.e. Carleson measures for Zen spaces), this result was stated in \cite{jacob2014}, and proved to be necessary as well as sufficient. An extension to $A^2(\mathbb{C}_+, \, (\nu_n)_{n=0}^m)$ was made in \cite{kucik2017}, but only as a necessary condition. In the last section of this paper we will prove that for some sequences of measures $(\nu_n)_{n=0}^m$ and sectorial measures $\mu$ it is also sufficient. However it still remains unclear if this could be true for a general case.
\end{remark}

A version of the next theorem (for Carleson measures for $A^2_{(m)})$ has been proved in \cite{kucik2017}, following closely earlier version for analytic Besov spaces in \cite{arcozzi2002} on the open unit disk $\mathbb{D}$ of the complex plane and Drury--Averson Hardy spaces and other Besov-Sobolev spaces on complex balls from \cite{arcozzi2008}.

\begin{theorem}
\label{thm:arcozzi}
Let $1<p \leq q < \infty$ and let $\mu$ be a positive Borel measure on $\mathbb{C}_+$. If $\rho$ is a regular weight such that 
\begin{equation}
	\int_{\mathbb{C}_+} |F'(z)|^p (\re(z))^{p-2} \rho(z)\, dz \leq \|F\|^p_{A^p(\mathbb{C}_+, \, (\nu_n)_{n=0}^m)},
	\label{eq:besov}
\end{equation}
for all $F \in A^p(\mathbb{C}_+, \, (\nu_n)_{n=0}^m)$ and there exists a constant $C(\mu, \rho)>0$ such that
\begin{equation}
\left(\int_{Q(a)} \frac{(\mu(Q(a) \cap Q(z))^{p'}}{(\re(z))^2} \rho(z)^{1-p'} \, dz \right)^{q'/p'} \leq C(\mu, \rho) \mu(Q(a)) \qquad (\forall a \in \mathbb{C}_+),
\label{eq:1}
\end{equation}
then the embedding
\begin{displaymath}
	A^p(\mathbb{C}_+, \, (\nu_n)_{n=0}^m) \hookrightarrow L^q(\mathbb{C}_+, \, \mu)
\end{displaymath}
is bounded.
\end{theorem}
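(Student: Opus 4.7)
The plan is to follow the strategy of Arcozzi--Rochberg--Sawyer developed in \cite{arcozzi2002, arcozzi2008} and already adapted to the $A^2_{(m)}$ setting in \cite{kucik2017}. First, hypothesis \eqref{eq:besov} reduces the desired Carleson embedding to a Besov-type inequality
\begin{displaymath}
\left(\int_{\mathbb{C}_+} |F(z)|^q \, d\mu(z)\right)^{1/q} \leq C \left(\int_{\mathbb{C}_+} |F'(z)|^p (\re(z))^{p-2} \rho(z) \, dz\right)^{1/p},
\end{displaymath}
so from this point on it suffices to control the left-hand side by a weighted $L^p$-norm of the derivative of $F$ alone.

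Next, I would represent $F$ as a positive integral transform of $|F'|$. On $\mathbb{C}_+$ this can be done either through a Cauchy--Green reproducing formula for the relevant weighted Bergman space, or through the elementary identity $F(z) = -\int_0^\infty F'(z+s) \, ds$ (valid when $F$ vanishes at $\infty$) combined with a mean-value/area estimate in the second variable. In either case one arrives at a pointwise bound $|F(z)| \leq \mathcal{T}|F'|(z)$, where $\mathcal{T}$ is a positive integral operator on $\mathbb{C}_+$ with a Poisson/Bergman-type kernel controlled by a negative power of $|z+\overline{w}|$. Consequently the embedding reduces to the boundedness of $\mathcal{T}$ from $L^p((\re z)^{p-2} \rho(z) \, dz)$ into $L^q(\mu)$.

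To handle this two-weight inequality for the positive operator $\mathcal{T}$, I would invoke a Sawyer-type testing theorem (used in exactly this form in \cite{arcozzi2008} and \cite{kucik2017}), which asserts that the required boundedness is equivalent to a testing condition on characteristic functions of Carleson squares $Q(a)$. The standard estimate that the kernel of $\mathcal{T}$ is comparable to $(\re a)^{-s}$ when both arguments lie in $Q(a)$, together with the doubling behaviour of the Carleson-box geometry, converts the Sawyer test to a bound of the form
\begin{displaymath}
\left(\int_{Q(a)} \frac{\left(\mu(Q(a) \cap Q(z))\right)^{p'}}{(\re z)^2} \rho(z)^{1-p'} \, dz\right)^{q'/p'} \lesssim \mu(Q(a)),
\end{displaymath}
which is precisely hypothesis \eqref{eq:1}. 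Once this matching is verified, the proof is complete.

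The main obstacle I expect is the bookkeeping in the second step: the exponents in the representation of $F$ by $F'$ must be arranged so that the kernel of $\mathcal{T}$ interacts correctly with the weight $(\re z)^{p-2}\rho(z)$ and so that the Carleson-box argument produces exactly the exponents appearing in \eqref{eq:1}. Handling possible boundary contributions (growth of $F$ at $\infty$ and along $i\mathbb{R}$) adds further technicalities. A cleaner alternative, closely mirroring \cite{kucik2017}, is to partition $\mathbb{C}_+$ dyadically into Carleson squares, pass to tree analogues of $F$ and $F'$, and invoke a tree $T1$ theorem whose testing condition is once again exactly \eqref{eq:1}; the passage from the discrete tree estimate back to the continuous embedding is then the technical payoff.
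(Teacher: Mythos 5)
Your closing ``cleaner alternative'' is, in fact, the paper's actual proof: the author decomposes $\mathbb{C}_+$ into dyadic rectangles indexed by a tree $T(\zeta)$, puts $\Phi(\alpha)=F(z_\alpha)$ at a point maximising $|F|$ on each box, writes $\Phi=\mathcal{I}\varphi$ with $\varphi(\alpha)=\Phi(\alpha)-\Phi(\alpha^-)$ (using that $F\in A^p_{\nu_0}$ lies in a Hardy-type space, so $F(z_\alpha)\to 0$ as the boxes recede to infinity and the telescoping converges), and then invokes the two-weight tree Hardy inequalities (Lemmata 3 and 4 of \cite{kucik2017}, i.e.\ the Arcozzi--Rochberg--Sawyer tree $T1$ theorem) whose testing condition is exactly \eqref{eq:1}. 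The return to a continuous quantity is done by the fundamental theorem of calculus along tree geodesics, the mean-value property of $F'$ on balls $B(w_\alpha,r_\alpha)$, and H\"older's inequality, which together produce $\int_{\mathbb{C}_+}|F'(z)|^p(\re(z))^{p-2}\rho(z)\,dz$ and hence, by \eqref{eq:besov}, $\|F\|^p_{A^p(\mathbb{C}_+,\,(\nu_n)_{n=0}^m)}$. That branch of your plan is the right one and matches the paper in outline.

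Your primary route --- a continuous positive operator $\mathcal{T}$ with a Bergman/Poisson-type kernel plus a Sawyer testing theorem --- contains a genuine gap at the step you describe as ``bookkeeping.'' For a two-sided positive kernel comparable to a negative power of $|z+\overline{w}|$, the two-weight $L^p\to L^q$ characterisations require testing the operator \emph{and} its adjoint on Carleson squares, whereas the theorem assumes only the single condition \eqref{eq:1}; moreover the quantity $\mu(Q(a)\cap Q(z))$ appearing there is precisely the mass of $\mu$ on the successor set of $z$ inside $Q(a)$, which is the signature of the \emph{one-sided} tree Hardy operator $\mathcal{I}$, not of a symmetric kernel. So there is no reason the Sawyer condition for a general such $\mathcal{T}$ collapses to \eqref{eq:1} alone; the claim that it does is the entire content of the theorem and is asserted rather than proved. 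The only way I see to make that route work is to take $\mathcal{T}$ to be the one-directional operator $\int_0^\infty|F'(z+s)|\,ds$ thickened over a Whitney decomposition --- at which point you have reconstructed the tree argument of the paper rather than found a second proof.
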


\begin{proof}
Let $\zeta \in \mathbb{C}_+$. We use a representation of $\mathbb{C}_+$ as an ordered tree $T(\zeta)$, namely, we decompose the complex half-plane into a set of rectangles
\begin{displaymath}
	R_{(k,l)}(\zeta) := \left\{z \in \mathbb{C}_+ \; : \; 2^{k-1} < \frac{\re(z)}{\re(\zeta)} \leq 2^{k}, \, 2^k l \leq \frac{\im(z)-\im(\zeta)}{\re{\zeta}} < 2^k (l+1)\right\},
\end{displaymath}
for all $(k, \, l) \in \mathbb{Z}^2$, and we identify each of these rectangles with a vertex of an abstract graph $T(\zeta)$. We put an order relation "$\leq$" on the set of vertices of $T(\zeta)$ by saying that $x \leq y$ whenever the area of the rectangle corresponding to $x$ is greater or equal to the area of the rectangle corresponding to $y$ and there is a sequence of horizontally adjacent rectangles $(R_{k, \, l}(\zeta))$ forming a path connecting the rectangles corresponding to $x$ and $y$. This decomposition is detailed in \cite{kucik2017}. 

Given $F \in A^p(\mathbb{C}_+, \, (\nu_n)_{n=0}^m)$, for each $\alpha \in T(\zeta)$ let $w_\alpha, \, z_\alpha \in \overline{\alpha} \subset \mathbb{C}_+$ be such that
\begin{displaymath}
	z_\alpha := \sup_{z \in \alpha} \{|F(z)|\} \qquad \text{ and } \qquad w_\alpha := \sup_{w \in \alpha} \{|F'(w)|\}.
\end{displaymath}
Define a weight $\tilde{\rho}$ on $T(\zeta)$ by $\tilde{\rho}(\alpha) := \rho(z_\alpha)$. And also: $r_\alpha = \re(w_\alpha)/4, \, \Phi(\alpha):= F(z_\alpha), \, \varphi(\alpha) = \Phi(\alpha)-\Phi(\alpha^-)$, for all $\alpha \in T(\zeta)$. Note that $\mathcal{I}\varphi = \Phi$. This is becuase if $F$ is in $A^p(\mathbb{C}_+, \, (\nu_n)_{n=0}^m$, then it is in the Zen space $A^p_{\nu_0}$, and hence in the Hardy space $H^p(\mathbb{C}_+)$ (or its shifted version, see \cite{peloso2016}), and hence $\lim_{\alpha \longrightarrow - \infty} |F(z_\alpha)| = \lim_{\re(z) \longrightarrow \infty}|F(z)|=0$. Since \eqref{eq:1} holds, we can apply Lemmata 3 and 4 from \cite{kucik2017} to $\varphi, \, \tilde{\rho}, \, \tilde{\mu}$ (where $\tilde{\mu}(\alpha):= \mu(\alpha)$, for all $\alpha \in T(\zeta)$) in the following way
\begin{align*}
\int_{\mathbb{C}_+} |F|^q \, d\mu & = \sum_{\alpha \in T(\zeta)} \int_\alpha |F|^q \, d\mu \leq \sum_{\alpha \in T(\zeta)} |\Phi(\alpha)|^q \tilde{\mu}(\alpha) \\
&\lessapprox \left(\sum_{\alpha \in T(\zeta)} |\varphi(\alpha)|^p \tilde{\rho}(\alpha)\right)^{q/p} \defeq \left(\sum_{\alpha \in T(\zeta)} |\Phi(\alpha)-\Phi(\alpha^-)|^p \tilde{\rho}(\alpha)\right)^{q/p} \\
&\stackrel{\stackrel{\text{Fundamental Thm}}{\text{of Calculus}}}{\leq} \left(\sum_{\alpha \in T(\zeta)} \left| \int_{z_{\alpha^-}}^{z_\alpha} F'(w) \, dw \right|^p \tilde{\rho}(\alpha)\right)^{q/p} \\
&\lessapprox \left(\sum_{\alpha \in T(\zeta)} \text{diam}(\alpha)^p |F'(w_\alpha)+F'(w_{\alpha^-})|^p \tilde{\rho}(\alpha)\right)^{q/p} \\
&\lessapprox \left(\sum_{\alpha \in T(\zeta)} \text{diam}(\alpha)^p |F'(w_\alpha)|^p \tilde{\rho}(\alpha)\right)^{q/p} \\
&\stackrel{\stackrel{\text{Mean-Value}}{\text{Property}}}{\leq} \left(\sum_{\alpha \in T(\zeta)} \text{diam}(\alpha)^p \left|\frac{1}{\pi r^2_\alpha} \int_{B(w_\alpha, \, r_\alpha)} F'(z) \, dz \right|^p \tilde{\rho}(\alpha)\right)^{q/p} \\
&\stackrel{\text{H\"{o}lder's}}{\leq} \left(\sum_{\alpha \in T(\zeta)} \frac{\text{diam}(\alpha)^p}{(\pi r_\alpha^2)^{p(1-1/p')}} \int_{B(w_\alpha, \, r_\alpha)} |F'(z)|^p \, dz \tilde{\rho}(\alpha)\right)^{q/p} \\
&\lessapprox \left(\sum_{\alpha \in T(\zeta)} \text{diam}(\alpha)^{p-2} \int_{\bigcup_{\beta \in T(\zeta) \; : \; \beta \cap B(w_\alpha, \, r_\alpha) \neq \varnothing}} |F'(z)|^p  \, dz \, \tilde{\rho}(\alpha) \right)^{q/p} \\
&\lessapprox \left(\sum_{\alpha \in T(\zeta)} \int_{\bigcup_{\beta \in T(\zeta) \; : \; \beta \cap B(w_\alpha, \, r_\alpha) \neq \varnothing}} |F'(z)|^p  \frac{\rho(z)}{(\re(z))^{2-p}}\, dz  \right)^{q/p} \\
&\lessapprox \left(\sum_{\alpha \in T(\zeta)} \int_{\alpha} |F'(z)|^p \frac{\rho(z)}{(\re(z))^{2-p}} \, dz\right)^{q/p},
\end{align*}
which is less than $\|F\|^q_{A^p(\mathbb{C}_+, \, (\nu_n)_{n=0}^m)}$ by the assumption of the theorem.
\end{proof}

\begin{remark}
Although condition \eqref{eq:besov} looks unnecessarily artificial and very restrictive, it simply means that $A^p(\mathbb{C}_+, \, (\nu)_{n=0}^m)$ is (or is contained within) some analytic Besov space on $\mathbb{C}_+$. For example, if $p=2$ and $\rho \equiv 1$, then $A^p(\mathbb{C}_+, \, (\nu)_{n=0}^m) \subseteq \mathcal{D}(\mathbb{C}_+)$, the Dirichlet space on $\mathbb{C}_+$. Condition \eqref{eq:1}, expressed in terms of Carleson boxes on $\mathbb{D}$ and distance from $\partial \mathbb{D}$, is known to be necessary and sufficient for the disk equivalent of the above theorem. It is not clear whether the same could be true for $\mathbb{C}_+$.
\end{remark}

\section{Laplace--Carleson embeddings for sectorial measures}
\label{sec:sectorial}

Testing the boundedness of Laplace--Carleson embedding for arbitrary \\ $1 \leq p, \, q < \infty$ is generally very difficult. We can however obtain some partial results if we consider measures with some restrictions imposed on their support

\begin{prop}
Let $1<p< \infty, \, 1 \leq q < \infty$, let $w$ be a measurable self-map on $(0, \, \infty)$ and suppose that $\mu$ be a positive Borel measure supported on $(0, \infty)$. If the Laplace--Carleson embedding $\mathfrak{L} : L^p_w (0, \, \infty) \longrightarrow L^q (\mathbb{C}_+, \, \mu)$ is well-defined and bounded, then
	\begin{displaymath}
		\mu(I) \leq C \left( \int_0^\infty \frac{e^{-|I|p't}}{w^{\frac{1}{p-1}}(t)} \, dt \right)^{-\frac{q}{p'}},
	\end{displaymath}
	for all intervals $I = (0, \, |I|]$, provided that the integral on the right exists.
	\end{prop}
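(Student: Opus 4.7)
The plan is a standard extremal-function testing argument. Because $\mu$ is supported on $(0,\infty)\subset\mathbb C_+$, the interval $I=(0,|I|]$ lies on the positive real axis, and for any $z\in I$ we have $z=\re z\le|I|$; hence for every nonnegative $f\in L^p_w(0,\infty)$
$$|\mathfrak L f(z)| \;=\; \int_0^\infty f(t)\,e^{-tz}\,dt \;\ge\; \int_0^\infty f(t)\,e^{-t|I|}\,dt.$$
Thus uniform lower bounds on $|\mathfrak L f|$ over $I$ reduce to estimating the single pairing $\int_0^\infty f(t)e^{-t|I|}\,dt$, and the task is to choose $f$ so as to maximise this pairing at fixed $\|f\|_{L^p_w}$, i.e.\ to take the extremiser in H\"older's inequality for the duality $L^p_w\leftrightarrow L^{p'}_{w^{-p'/p}}$ mentioned just before Theorem~\ref{thm:2ndjacob}.

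Concretely, I would set
$$f_0(t)\;:=\;e^{-t|I|/(p-1)}\,w(t)^{-1/(p-1)}, \qquad t>0.$$
Using $p/(p-1)=p'$ and $1/(p-1)=p'/p$, a direct computation gives
$$\|f_0\|_{L^p_w}^p\;=\;\int_0^\infty e^{-t|I|p'}\,w(t)^{-1/(p-1)}\,dt\;=\;\int_0^\infty f_0(t)\,e^{-t|I|}\,dt\;=\;\mathfrak L f_0(|I|).$$
The hypothesis that the integral on the right of the stated inequality exists is precisely the hypothesis $f_0\in L^p_w(0,\infty)$, and it is the only step one must verify carefully. Combining this with the pointwise bound from the previous paragraph yields $|\mathfrak L f_0(z)|\ge\|f_0\|_{L^p_w}^p$ for every $z\in I$.

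Feeding this into the assumed boundedness of $\mathfrak L:L^p_w(0,\infty)\to L^q(\mathbb C_+,\mu)$, with constant $C$ say, gives
$$\|f_0\|_{L^p_w}^{pq}\,\mu(I)\;\le\;\int_I|\mathfrak L f_0|^q\,d\mu\;\le\;\int_{\mathbb C_+}|\mathfrak L f_0|^q\,d\mu\;\le\;C^q\|f_0\|_{L^p_w}^q.$$
Dividing by $\|f_0\|_{L^p_w}^{pq}$ and using $p-1=p/p'$ rewrites this as
$$\mu(I)\;\le\;C^q\,\|f_0\|_{L^p_w}^{-q(p-1)}\;=\;C^q\left(\int_0^\infty e^{-t|I|p'}\,w(t)^{-1/(p-1)}\,dt\right)^{-q/p'},$$
which is the claim. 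There is no genuine obstacle: the key insight is that restricting $\mu$ to the real ray collapses the Laplace transform into an ordinary $L^p_w$--$L^{p'}_{w^{-p'/p}}$ duality pairing, so the extremiser is explicit. The only thing to get right is the index bookkeeping (the interplay of $1/(p-1)$, $p'/p$ and the factor $p'$ in the exponent), which all stems from the equality case of H\"older's inequality.
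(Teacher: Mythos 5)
Your proof is correct and follows essentially the same route as the paper: the paper tests the embedding against $e^{-at}/w^{1/(p-1)}(t)$ for general $a>0$ and only at the end optimises by choosing $a=|I|/(p-1)$, which is exactly your $f_0$, and the index bookkeeping matches. Your observation that the hypothesis on the integral is precisely $f_0\in L^p_w(0,\infty)$ is also the right way to see why that proviso appears in the statement.
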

	
\begin{proof}
	\item Let $0 < x \leq |I|$ and $a>0$, then
	\begin{displaymath}
		\left|\mathfrak{L}\left[\frac{e^{-\cdot a}}{w^{\frac{1}{p-1}}(\cdot)}\right](x)\right| = \int_0^\infty \frac{e^{-t(a+x)}}{w^{\frac{1}{p-1}}(t)} \, dt \geq \int_0^\infty \frac{e^{-t(a+|I|)}}{w^{\frac{1}{p-1}}(t)} \, dt.
	\end{displaymath}
	And hence
	\begin{align*}
		\mu(I) &\leq \left(\int_0^\infty \frac{e^{-t(a+|I|)}}{w^{\frac{1}{p-1}}(t)} \, dt\right)^{-q} \int_I \left|\mathfrak{L}\left[\frac{e^{-\cdot a}}{w^{\frac{1}{p-1}}(\cdot)}\right](x)\right|^q \, d\mu(x) \\
		&\leq \left(\int_0^\infty \frac{e^{-t(a+|I|)}}{w^{\frac{1}{p-1}}(t)} \, dt\right)^{-q} \int_{\mathbb{C}_+} \left|\mathfrak{L}\left[\frac{e^{-\cdot a}}{w^{\frac{1}{p-1}}(\cdot)}\right](x)\right|^q \, d\mu(x) \\
		&\leq C(\mu) \left(\int_0^\infty \frac{e^{-t(a+|I|)}}{w^{\frac{1}{p-1}}(t)}\, dt\right)^{-q} \left\|\frac{e^{-\cdot a}}{w^{\frac{1}{p-1}}(\cdot)}\right\|^q_{L^p_w (0, \, \infty)} \\
		&= C(\mu) \left(\int_0^\infty \frac{e^{-t(a+|I|)}}{w^{\frac{1}{p-1}}(t)}\, dt\right)^{-q} \left(\int_0^\infty \frac{e^{-apt}}{w^{\frac{p}{p-1}}(t)} w(t) \, dt \right)^{\frac{q}{p}} \\
		&= C(\mu) \left(\int_0^\infty \frac{e^{-t(a+|I|)}}{w^{\frac{1}{p-1}}(t)}\, dt\right)^{-q} \left(\int_0^\infty \frac{e^{-apt}}{w^{\frac{1}{p-1}}(t)} \, dt \right)^{\frac{q}{p}},
	\end{align*}
	where $C(\mu)>0$ is the constant from the Laplace--Carleson embedding. Choosing $a = |I|/(p-1)$ gives us the desired result.
	\end{proof}
	
\begin{theorem}
Given $0<a\leq b< \infty$, let
\begin{displaymath}
	S_{(a,\, b]} := \{z \in \mathbb{C}_+ \; : \; a < \re(z) \leq b\}.
\end{displaymath}
If there exists a partition 
\begin{displaymath}
	P : 0 < \ldots \leq x_{-n} \leq \ldots \leq x_{-1} \leq x_0 \leq x_1 \leq \ldots \leq x_n \leq \ldots \qquad \qquad n \in \mathbb{N}
\end{displaymath}
of $(0, \, \infty)$ and sequence $(c_n) \in \ell^1_{\mathbb{Z}}$ such that
	\begin{displaymath}
		\mu(S_{(x_n, \, x_{n+1}]}) \leq |c_n| \left(\int_0^\infty \frac{e^{-p'tx_n}}{w^{\frac{1}{p-1}}(t)} \, dt \right)^{-\frac{q}{p'}} \qquad \qquad (\forall n \in \mathbb{Z}),
	\end{displaymath}
	then the Laplace--Carleson embedding $\mathfrak{L} : L^p_w (0, \, \infty) \longrightarrow L^q (\mathbb{C}_+, \, \mu)$ is well-defined and bounded.
	\end{theorem}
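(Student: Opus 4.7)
The natural approach is to mirror the Hölder-type reasoning from the preceding proposition: the hypothesis on $\mu(S_{(x_n,x_{n+1}]})$ is precisely the dual shape of the necessary condition, so once the correct test function is inserted inside the Laplace transform, everything should cancel and leave an $\ell^1$ sum.

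First I would observe that the partition $P$ (with $x_{-n}\to 0^+$ and $x_n\to\infty$, as the notation indicates) yields a partition of $\mathbb{C}_+$ into the vertical strips $S_{(x_n,x_{n+1}]}$, so
\begin{displaymath}
    \int_{\mathbb{C}_+}|\mathfrak{L}f(z)|^q\,d\mu(z)=\sum_{n\in\mathbb{Z}}\int_{S_{(x_n,x_{n+1}]}}|\mathfrak{L}f(z)|^q\,d\mu(z)
\end{displaymath}
for any $f\in L^p_w(0,\infty)$. On each strip $\re(z)>x_n$, so
\begin{displaymath}
    |\mathfrak{L}f(z)|\leq\int_0^\infty|f(t)|e^{-tx_n}\,dt=\int_0^\infty\bigl(|f(t)|w(t)^{1/p}\bigr)\cdot\bigl(w(t)^{-1/p}e^{-tx_n}\bigr)\,dt.
\end{displaymath}
Hölder's inequality with conjugate exponents $p,p'$ (using $p'/p=1/(p-1)$) then gives
\begin{displaymath}
    |\mathfrak{L}f(z)|\leq\|f\|_{L^p_w(0,\infty)}\cdot I_n^{1/p'},\qquad I_n:=\int_0^\infty\frac{e^{-p'tx_n}}{w(t)^{1/(p-1)}}\,dt.
\end{displaymath}

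Raising to the $q$-th power, integrating over $S_{(x_n,x_{n+1}]}$, and applying the hypothesis
\begin{displaymath}
    \int_{S_{(x_n,x_{n+1}]}}|\mathfrak{L}f(z)|^q\,d\mu(z)\leq\|f\|^q_{L^p_w(0,\infty)}\,I_n^{q/p'}\,\mu(S_{(x_n,x_{n+1}]})\leq|c_n|\,\|f\|^q_{L^p_w(0,\infty)}.
\end{displaymath}
Summing over $n\in\mathbb{Z}$ produces the clean bound
\begin{displaymath}
    \|\mathfrak{L}f\|_{L^q(\mathbb{C}_+,\mu)}\leq\|(c_n)\|^{1/q}_{\ell^1_{\mathbb{Z}}}\,\|f\|_{L^p_w(0,\infty)},
\end{displaymath}
which is both the well-definedness and the required boundedness.

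The proof is essentially a clean exercise in Hölder's inequality; the only genuinely delicate point is verifying that $\mathfrak{L}f$ is $\mu$-a.e.\ defined. This follows because wherever $I_n=+\infty$, the hypothesis forces the right-hand side of the estimate on $\mu(S_{(x_n,x_{n+1}]})$ to vanish, so $\mu$ is concentrated on the strips where $I_n$ is finite—and on each such strip the same Hölder estimate applied pointwise shows that $\mathfrak{L}f(z)$ is an absolutely convergent integral. Thus there is no real obstacle; the interest of the theorem lies in the fact that the hypothesis is exactly the partition-version dual of the necessary condition of the previous proposition, so together they give a sharp two-sided description of $\mathfrak{L}$-Carleson measures supported on the real axis (or, more generally, on vertical-strip decompositions of $\mathbb{C}_+$).
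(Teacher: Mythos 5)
Your argument is exactly the paper's proof: split $\mathbb{C}_+$ into the strips $S_{(x_n,x_{n+1}]}$, bound $|\mathfrak{L}f(z)|$ on each strip by H\"older's inequality against the weight $w^{1/p}$ using $\re(z)>x_n$, and sum the hypothesis over $n$ to get the $\ell^1$ bound. Your extra observation about well-definedness when some $I_n=+\infty$ is a small but correct refinement that the paper leaves implicit.
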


\begin{proof}
	For any$z \in S_{(x_k, \, x_{k+1}]}$ and $f \in L^p_w(0, \, \infty)$ we have
	\begin{displaymath}
		|\mathfrak{L}f(z)| \leq \int_0^\infty e^{-tx_n} |f(t)| \, dt \stackrel{\text{H\"{o}lder's}}{\leq} \left(\int_0^\infty \frac{e^{-p'tx_n}}{w^{\frac{1}{p-1}}(t)} \, dt \right)^{\frac{1}{p'}} \|f\|_{L^p_w (0, \, \infty)},
	\end{displaymath}
	so 
	\begin{align*}
	\int_{\mathbb{C}_+} |\mathfrak{L}f|^q \, d\mu &\leq \|f\|^q_{L^p_w (0, \, \infty)} \sum_{n=-\infty}^\infty \left(\int_0^\infty \frac{e^{-p'tx_n}}{w^{\frac{1}{p-1}}(t)} \, dt \right)^{\frac{q}{p'}} \mu(S_{(x_n, \, x_{n+1}]}) \\
	&\leq \|(c_n)\|_{\ell^1_{\mathbb{Z}}} \|f\|^q_{L^p_w (0, \, \infty)}.
	\end{align*}
\end{proof}

\begin{definition}
Let $1 \leq p \leq \infty$ and let $f \in L^p(\mathbb{R})$. We define the \emph{maximal function} of $f$ to be
\begin{displaymath}
	Mf(x) = \sup_{r>0} \frac{1}{2r} \int_{|y| \leq r} |f(x-y)| \, dy.
\end{displaymath}
\end{definition}
The maximal function of $f$ is finite almost everywhere. The book \cite{stein1993} by E. M. Stein offers extensive description of the maximal function and its properties, such as the link between $Mf$ and the $L^p$ norm of $f$ used in the arguments below.

\begin{lemma}
Let $f$ be in $L^p_w(0, \, \infty), \, 1 \leq p < \infty$. Then for all $x>0$ and any partition 
\begin{displaymath}
	P \, : \, 0 \leq \ldots \leq t_{-k} \leq \ldots \leq t_0 = 1 \leq t_1 \leq \ldots \leq t_k \leq \ldots \qquad k \in \mathbb{N}_0
\end{displaymath}
of $[0, \, \infty)$, such that $\inf_{k \in \mathbb{N}} t_{-k} =0$. We then have
\begin{equation}
	\int_0^\infty e^{-\frac{t}{x}} |f(t)| \, dt  \leq \Theta(P, w, x) xMg(x),
\label{eq:estlemma}
\end{equation}
where 
\begin{displaymath}
g(t) = \begin{cases} w^{1/p}(t)f(t), &\quad t >0, \\ 0 &\quad t\leq 0, \end{cases}
\end{displaymath}
$g \in L^p(\mathbb{R})$, and
\begin{displaymath}
	\Theta(P, w, x) = 2\left[\sum_{k=-\infty}^{-1} \frac{e^{-t^*_k}}{w^{\frac{1}{p}}(t^*_k x)} \left(1-t_k\right) + \sum_{k=0}^{\infty} \frac{e^{-t^*_k}}{w^{\frac{1}{p}}(t^*_k x)} \left(t_{k+1}-1\right)\right],
\end{displaymath}
where each $t^*_k$ is such that
\begin{displaymath}
	 \frac{e^{-t^*_k}}{w^{\frac{1}{p}}(t^*_k x)} \geq  \frac{e^{-t}}{w^{\frac{1}{p}}(t x)} \qquad \qquad (\forall t \in \left(t_k, \, t_{k+1} \right)).
\end{displaymath}
\label{estlemma}
\end{lemma}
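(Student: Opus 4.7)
The plan is to reduce the integral on the left-hand side to a weighted sum of local averages of $g$, which can then be compared to the Hardy--Littlewood maximal function $Mg(x)$. First, I would substitute $s=t/x$, obtaining
$$\int_0^\infty e^{-t/x}|f(t)|\,dt = x\int_0^\infty e^{-s}|f(sx)|\,ds,$$
and then factor $|f(sx)| = |g(sx)|/w^{1/p}(sx)$ for $s>0$. The identity $\|g\|_{L^p(\mathbb{R})}=\|f\|_{L^p_w(0,\,\infty)}$ makes the assertion $g\in L^p(\mathbb{R})$ transparent. I then slice the integral along the partition $P$ and, on each piece $(t_k,\,t_{k+1})$, replace the factor $e^{-s}/w^{1/p}(sx)$ by its supremum $e^{-t_k^*}/w^{1/p}(t_k^* x)$, which is precisely the coefficient appearing in the definition of $\Theta(P,w,x)$.

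Next, I would reverse the substitution on each piece by setting $y=sx$, converting $\int_{t_k}^{t_{k+1}}|g(sx)|\,ds$ into $\frac{1}{x}\int_{t_k x}^{t_{k+1}x}|g(y)|\,dy$. Since $t_0=1$, the interval $[t_k x,\,t_{k+1}x]$ sits to the right of $x$ for $k\geq 0$ and to the left of $x$ for $k\leq -1$. In the first case $[t_k x,\,t_{k+1}x]\subseteq [x-(t_{k+1}-1)x,\,x+(t_{k+1}-1)x]$, and in the second $[t_k x,\,t_{k+1}x]\subseteq [x-(1-t_k)x,\,x+(1-t_k)x]$. Dominating the integral of $|g|$ over each of these symmetric intervals by $2r\,Mg(x)$, with $r$ equal to the appropriate half-width, produces the factors $2(t_{k+1}-1)x\,Mg(x)$ for $k\geq 0$ and $2(1-t_k)x\,Mg(x)$ for $k\leq -1$.

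Summing over all $k\in\mathbb{Z}$, the $x$ from the initial substitution and the $1/x$ from the reverse substitution combine to leave a single factor of $x\,Mg(x)$ multiplied by precisely $\Theta(P,w,x)$, yielding the desired estimate \eqref{eq:estlemma}. The entire argument is essentially bookkeeping; the one point that requires genuine care is the role of the normalisation $t_0=1$, since this is what ensures each interval $[t_k x,\,t_{k+1}x]$ lies on a definite side of $x$ and forces the asymmetric form of $\Theta(P,w,x)$. The assumption $\inf_{k \in \mathbb{N}} t_{-k}=0$ guarantees that the partition exhausts $[0,\,\infty)$, so no mass of the integrand is lost when the pieces are reassembled.
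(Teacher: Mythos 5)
Your argument is correct and is essentially the same as the paper's: the same substitution $s=t/x$, the same slicing along the partition with the supremum coefficients $e^{-t_k^*}/w^{1/p}(t_k^*x)$, and the same domination of each piece by a symmetric interval around $x$ of half-width $(1-t_k)x$ or $(t_{k+1}-1)x$ compared against $Mg(x)$. The only cosmetic difference is that the paper packages the half-widths as $r_k:=\max\{|1-t_k|,\,|1-t_{k+1}|\}$ before identifying which term survives according to the sign of $k$, whereas you identify them directly.
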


\begin{proof}
Let $r_k := \max \left\{|1-t_k|, \, |1-t_{k+1}|\right\}$, for each $k$. Given $x>0$, we have
\begin{align*}
\int_0^\infty e^{-\frac{t}{x}} |f(t)| \, dt &= x \int_0^\infty e^{-t} |f(tx)| \, dt \leq x\sum_{k=-\infty}^\infty \frac{e^{-t^*_k}}{w^{\frac{1}{p}}(t^*_k x)} \int_{t_k}^{t_{k+1}} |g(tx)| \, dt \\
&=\sum_{k=-\infty}^\infty \frac{e^{-t^*_k}}{w^{\frac{1}{p}}(t^*_k x)} \int_{(1-t_{k+1})x}^{(1-t_k)x} |g(x-y)| \, dy \\
&\leq \sum_{k=-\infty}^\infty \frac{e^{-t^*_k}}{w^{\frac{1}{p}}(t^*_k x)} \frac{r_k x}{r_k x} \int_{|y| \leq r_k x} |g(x-y)| \, dy \\
&\leq 2\left[\sum_{k=-\infty}^\infty \frac{e^{-t^*_k}}{w^{\frac{1}{p}}(t^*_k x)} r_k \right] xMg(x).
\end{align*}
To get the required result note that if $k \leq -1$, then $t_{k+1} \leq t_0 = 1$ and hence
\begin{displaymath}
 1-t_k \geq 1-t_{k+1} \geq 0 \quad \Longrightarrow \quad r_k=|1-t_k| = 1-t_k,
\end{displaymath}
otherwise $t_{k+1} > 1$, so $t_k \geq 1$, and so
\begin{displaymath}
 0 \geq 1-t_k \geq 1-t_{k+1} \quad \Longrightarrow \quad r_k=|1-t_{k+1}| = t_{k+1}-1.
\end{displaymath}
\end{proof}

The following theorem has been proved in \cite{jacob2013} (Theorem 3.3, p. 801) for unweighted $L^p(0, \, \infty)$ case. We use the above lemma to obtain a weighted version.

\begin{theorem}
\label{thm:sectorial}
Let $1 < p \leq q < \infty$, let $\mu$ be a positive Borel measure on $\mathbb{C}_+$ supported only in the sector
\begin{displaymath}
	S(\theta) := \{z \in \mathbb{C}_+ \, : \, |\arg(z)| < \theta\},
\end{displaymath}
for some $0 \leq \theta < \pi/2$, and let $\alpha<p-1$. For  an interval $I=(0, \, |I|) \subset \mathbb{R}$ we define
\begin{displaymath}
	\Delta_I: = \left\{z \in S(\theta) \; : \; \re(z) \leq |I| \right\}.
\end{displaymath}
The Laplace--Carleson embedding $\mathfrak{L} : L^p_{t^\alpha}(0, \, \infty) \longrightarrow L^q(\mathbb{C}_+, \, \mu)$ is well-defined and bounded if and only if there exists a constant $C(\mu) >0$ such that
\begin{equation}
	\mu(\Delta_I) \leq C(\mu) |I|^{\frac{q}{p'}\left(1-\frac{\alpha}{p-1}\right)}
	\label{eq:seclapcarl}
\end{equation}
for all intervals $I=(0, \, |I|) \subset \mathbb{R}$.
\end{theorem}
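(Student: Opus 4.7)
For necessity, I would test the embedding against $f_I(t) := \chi_{(0,1/|I|]}(t)$. One computes $\|f_I\|_{L^p_{t^\alpha}}^p = |I|^{-(1+\alpha)}/(1+\alpha)$ (valid in the natural range $\alpha > -1$). For $z \in \Delta_I \subset S(\theta)$ the sector bound $|z| \le \re(z)/\cos\theta \le |I|/\cos\theta$ delivers the uniform lower estimate $|\mathfrak{L}f_I(z)| = |(1-e^{-z/|I|})/z| \gtrsim_\theta 1/|I|$. Combining these facts with the hypothesised boundedness, one obtains $c(\theta)|I|^{-q}\mu(\Delta_I) \le \int_{\Delta_I}|\mathfrak{L}f_I|^q\,d\mu \le C(\mu)|I|^{-q(1+\alpha)/p}$, which rearranges precisely to \eqref{eq:seclapcarl}.

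For sufficiency I would carry out a dyadic decomposition (paralleling the maximal-function estimate of Lemma~\ref{estlemma}) in both variables. Set $I_j := (2^{-j-1},2^{-j}]$, $F_k := \{z \in S(\theta) : 2^k \le \re(z) < 2^{k+1}\}$, $f_j := f\chi_{I_j}$, and $a_j := \|f_j\|_{L^p_{t^\alpha}}$, so that $\|a\|_{\ell^p} = \|f\|_{L^p_{t^\alpha}}$. Since $|e^{-tz}| = e^{-t\re(z)}$ throughout $\mathbb{C}_+$, H\"older's inequality with conjugate exponents $p, p'$ (the condition $\alpha < p-1$ ensuring the integrability of $t^{-\alpha/(p-1)}$ on $I_j$) yields, uniformly in $z \in F_k$,
\[
  |\mathfrak{L}f_j(z)| \le a_j \left(\int_{I_j} t^{-\alpha/(p-1)}\, e^{-tp'\re(z)}\,dt\right)^{\!1/p'} \lesssim a_j\,2^{-j\beta}\,e^{-c\max(0,\, 2^{k-j})},
\]
where $\beta := (p-1-\alpha)/p > 0$ and $c>0$ is absolute. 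The exponential factor encodes the damping of $e^{-t\re(z)}$ when $t\re(z) \gtrsim 1$.

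Combining with the hypothesised $\mu(F_k) \le \mu(\Delta_{(0,2^{k+1}]}) \le C\,2^{kq\beta}$ from \eqref{eq:seclapcarl} and the triangle bound $|\mathfrak{L}f| \le \sum_j |\mathfrak{L}f_j|$, one arrives at
\[
  \int_{S(\theta)} |\mathfrak{L}f|^q\,d\mu \lesssim \sum_{k \in \mathbb{Z}} 2^{kq\beta} \Bigl(\sum_j \phi(k-j)\,a_j\Bigr)^{\!q} = \|\phi \ast a\|_{\ell^q}^q,
\]
where $\phi(m) := 2^{m\beta}\,e^{-c\max(0,\, 2^m)}$. Because $\phi$ has geometric decay as $m \to -\infty$ (thanks to $\beta>0$) and super-exponential decay as $m \to +\infty$, $\phi \in \ell^r(\mathbb{Z})$ for every $r \ge 1$. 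Young's convolution inequality with $1/r + 1/p = 1 + 1/q$ (admissible because $p \le q$) then gives $\|\phi \ast a\|_{\ell^q} \le \|\phi\|_{\ell^r}\|a\|_{\ell^p} \lesssim \|f\|_{L^p_{t^\alpha}}$, completing the sufficiency.

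The main technical point is arranging the H\"older step so that the polynomial gain $2^{-j\beta}$ and the exponential damping $e^{-c\max(0,\,2^{k-j})}$ combine with the Carleson bound on $\mu(F_k)$ to produce a convolution kernel in the scale index $m = k-j$; the hypothesis $\alpha < p-1$ (equivalently $\beta>0$) is precisely what makes $\phi$ summable on the negative side of $\mathbb{Z}$. The sectorial assumption $\theta<\pi/2$ is used both for the pointwise lower bound on $|\mathfrak{L}f_I|$ in the necessity argument and to reduce the $\mu$-integral over $S(\theta)$ to the dyadic annuli $F_k$ whose measure is controlled directly by \eqref{eq:seclapcarl}.
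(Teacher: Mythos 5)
Your sufficiency argument is correct but follows a genuinely different route from the paper's. The paper decomposes only the half-plane, into the strips $T_n=\{z\in S(\theta): 2^{n-1}<\re(z)\le 2^n\}$, and controls $\sup_{T_n}|\mathfrak{L}f|$ by the Hardy--Littlewood maximal function of $g=t^{\alpha/p}f$ via Lemma \ref{estlemma}, finishing with the discrete $\ell^{q/p}\hookleftarrow\ell^1$ embedding and the $L^p$ bound for $Mg$. You instead decompose dyadically in both the time variable and $\re(z)$, apply H\"older on each block to produce the kernel $\phi(k-j)=2^{(k-j)\beta}e^{-c\max(0,2^{k-j})}$ with $\beta=(p-1-\alpha)/p$, and close with Young's convolution inequality on $\mathbb{Z}$ (using $p\le q$). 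This is a clean Schur-type alternative that avoids the maximal-function machinery altogether; the paper's approach, on the other hand, is what generalises to the weights $w_{(m)}$ treated in the final theorem of Section \ref{sec:sectorial}, where no single power-weight homogeneity is available. One small slip: the condition $\alpha<p-1$ is not needed for integrability of $t^{-\alpha/(p-1)}$ on $I_j$ (each $I_j$ is bounded away from $0$); its real role is exactly where you use it later, namely $\beta>0$, which makes $\phi$ summable as $m\to-\infty$.

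The necessity half has a genuine gap: your test function $\chi_{(0,1/|I|]}$ lies in $L^p_{t^\alpha}(0,\infty)$ only when $\alpha>-1$, whereas the theorem asserts the equivalence for all $\alpha<p-1$, and the case $\alpha\le-1$ is not vacuous (it is precisely the regime exploited in the paper's second example). The paper avoids this by testing against $f(t)=e^{-|I|\sec(\theta)t}\,t^{-\alpha/(p-1)}$, whose $L^p_{t^\alpha}$ norm is finite exactly when $\alpha<p-1$ and whose Laplace transform $\Gamma(1-\tfrac{\alpha}{p-1})\,(z+|I|\sec\theta)^{-(1-\alpha/(p-1))}$ admits the required uniform lower bound on $\Delta_I$. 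Your computation is otherwise correct (the exponents match, and the compactness argument giving $|(1-e^{-w})/w|\gtrsim_\theta 1$ on the truncated sector is fine), so the fix is simply to replace the indicator by a test function adapted to the weight, as the paper does.
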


\begin{proof}
Suppose first that \eqref{eq:seclapcarl} holds. Let 
\begin{displaymath}
	T_n := \left\{z \in S(\theta) \; : \; 2^{n-1} < \re(z) \leq 2^n \right\} \subset \Delta_{(0, \, 2^n)} \qquad  \qquad (n \in \mathbb{Z}),
\end{displaymath}
and let also $x_n = 2^{-n+1}$. Clearly
\begin{displaymath}
	S(\theta) = \bigcup_{n \in \mathbb{Z}} T_n \qquad \text{ and } \qquad \mu(T_n) \leq \mu(\Delta_{(0, \, 2^n)}) \stackrel{\eqref{eq:seclapcarl}}{\leq} C(\mu) x_n^{-\frac{q}{p'}\left(1-\frac{\alpha}{p-1}\right)}.
\end{displaymath}
By the previous lemma we have that
\begin{displaymath}
	|\mathfrak{L}f(z)| \leq \int_0^\infty e^{-\frac{t}{x_n}} |f(t)| \, dt \stackrel{\eqref{eq:estlemma}}{\leq} \Theta(P, t^\alpha, x_n) x_n Mg(x_n),
\end{displaymath}
for all $z \in T_n$ ($\Theta$ and $g$ are as in Lemma \ref{estlemma}). Note that the choice of $t_k^*$ does not depend on $x_n$, since
\begin{displaymath}
	\frac{e^{-t^*_k}}{(t^*_k x_n)^{\frac{\alpha}{p}}} \geq  \frac{e^{-t}}{(t x_n)^{\frac{\alpha}{p}}} \; \; \forall t \in (t_k, \, t_{k+1}) \quad \Longleftrightarrow \quad \frac{e^{-t^*_k}}{(t^*_k)^{\frac{\alpha}{p}}} \geq  \frac{e^{-t}}{t^{\frac{\alpha}{p}}}  \; \; \forall t \in (t_k, \, t_{k+1}),
\end{displaymath}
and there exists a partition $P$ of $(0, \, \infty)$, for which $\Theta(P, t^\alpha, x_n)$ converges (since $\alpha < p$), so fixing $P$ we can define $D_\Theta = x_n^{\frac{\alpha}{p}} \Theta(P, t^\alpha, x_n)$, which, by the definition of $\Theta$, is a constant depending on $P$ and $\alpha$ only. Thus we have
\begin{align*}
\int_{S(\theta)} |\mathfrak{L}f|^q \, d\mu &\leq D_\Theta \sum_{n=-\infty}^\infty \left(x_n^{1-\frac{\alpha}{p}} Mg(x_n)\right)^q \mu(T_n) \\
&\leq C(\mu)D_\Theta \sum_{n=-\infty}^\infty x_n^{q\left(1-\frac{\alpha}{p}\right)-\frac{q}{p'}\left(1-\frac{\alpha}{p-1}\right)} Mg(x_n)^q \\
&= C(\mu)D_\Theta \sum_{n=-\infty}^\infty x_n^{q\left(1-\frac{\alpha}{p}-\frac{1}{p'}+\frac{\alpha}{p}\right)} Mg(x_n)^q \\
&= C(\mu)D_\Theta \sum_{n=-\infty}^\infty \left(x_n Mg(x_n)^p\right)^{\frac{q}{p}} \\
&\leq C(\mu)D_\Theta \left(\sum_{n=-\infty}^\infty x_n Mg(x_n)^p\right)^{\frac{q}{p}} \\
&\lessapprox \left\|g\right\|^q_{L^p(0, \, \infty)} = \left\|f\right\|^q_{L^p_{t^\alpha}(0, \, \infty)}.
\end{align*}
Now suppose that the converse is true. For each $z \in \Delta_I$ we have $|z| \leq |I|\sec(\theta)$, so
	\begin{displaymath}
		\left|\mathfrak{L} \left[\frac{e^{-|I|\sec(\theta)t}}{t^{\frac{\alpha}{p-1}}}\right](z)\right| = \frac{\Gamma\left(1-\frac{\alpha}{p-1}\right)}{\left|z+|I|\sec(\theta)\right|^{1-\frac{\alpha}{p-1}}} \geq  \frac{\Gamma\left(1-\frac{\alpha}{p-1}\right)}{\left(2|I|\sec(\theta)\right)^{1-\frac{\alpha}{p-1}}}.
	\end{displaymath}
	And therefore we have
	\begin{align*}
		\mu(\Delta_I) &\lessapprox |I|^{q\left(1-\frac{\alpha}{p-1}\right)} \int_{S(\theta)} \left|\mathfrak{L} \left[\frac{e^{-|I|\sec(\theta)t}}{t^{\frac{\alpha}{p-1}}}\right](z)\right|^q \, d\mu(z) \\ 
		&\lessapprox |I|^{q\left(1-\frac{\alpha}{p-1}\right)} \left\|\frac{e^{-|I|\sec(\theta)t}}{t^{\frac{\alpha}{p-1}}}\right\|^q_{L^p_{t^\alpha}(0, \, \infty)} \\
		&= |I|^{q\left(1-\frac{\alpha}{p-1}\right)} \left(\int_0^\infty \frac{e^{-|I|p\sec(\theta)t}}{t^{\frac{\alpha}{p-1}}} \, dt \right)^{\frac{q}{p}} \\
		&\lessapprox |I|^{q\left(1-\frac{\alpha}{p-1}\right)} |I|^{-\frac{q}{p}\left(1-\frac{\alpha}{p-1}\right)} \\
		&= |I|^{\frac{q}{p'}\left(1-\frac{\alpha}{p-1}\right)},
		\end{align*}
		as required.
\end{proof}

\begin{corollary}
Let $1 < p \leq q < \infty$, let $w$ be a measurable self-map on $(0, \, \infty)$, and let $\mu$ be a positive Borel measure on $\mathbb{C}_+$ supported only in the sector $S(\theta)$, $0~\leq~\theta~<~\pi/2$. Suppose that \begin{displaymath}
	\sup_{t>0} \frac{t^\alpha}{w(t)} < \infty
\end{displaymath}
for some $\alpha < p-1$. If for some family of intervals $(I_n)_{n \in \mathbb{Z}} = ((0, \, 2^n|I|))_{n \in \mathbb{Z}}$ there exists $C(\mu)>0$ such that
	\begin{displaymath}
	\mu(\Delta_{I_n}) \leq C(\mu) (|I_n|)^{\frac{q}{p'}\left(1-\frac{\alpha}{p-1}\right)} \qquad \qquad (\forall n \in \mathbb{Z}),
	\end{displaymath}
then the Laplace--Carleson embedding $\mathfrak{L} : L^p_w (0, \, \infty) \longrightarrow L^q (\mathbb{C}_+, \, \mu)$ is well-defined and bounded.
\end{corollary}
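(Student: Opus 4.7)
The plan is to reduce the corollary to Theorem~\ref{thm:sectorial} applied with the power weight $t^\alpha$. First, the pointwise hypothesis $M := \sup_{t>0} t^\alpha/w(t) < \infty$ yields $w(t) \geq t^\alpha/M$ for almost every $t>0$, so
$$\|f\|_{L^p_{t^\alpha}(0,\,\infty)} \leq M^{1/p}\|f\|_{L^p_w(0,\,\infty)}$$
for every $f$; equivalently, there is a continuous inclusion $L^p_w(0,\,\infty) \hookrightarrow L^p_{t^\alpha}(0,\,\infty)$.

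Next I would upgrade the measure estimate, which is only assumed on the discrete family $(I_n)_{n \in \mathbb{Z}}$, to the full condition \eqref{eq:seclapcarl} needed by Theorem~\ref{thm:sectorial}. Since the lengths $(|I_n|)_{n \in \mathbb{Z}} = (2^n|I|)_{n \in \mathbb{Z}}$ form a bi-infinite geometric progression of ratio~$2$, for any interval $J=(0,\,|J|)$ with $|J|>0$ there exists a unique $n \in \mathbb{Z}$ with $|I_n|/2 < |J| \leq |I_n|$. The regions $\Delta_J$ depend monotonically on $|J|$, so $\Delta_J \subseteq \Delta_{I_n}$, and the assumed bound gives
$$\mu(\Delta_J) \leq \mu(\Delta_{I_n}) \leq C(\mu)\,|I_n|^{\frac{q}{p'}\left(1-\frac{\alpha}{p-1}\right)} \leq C(\mu)\,2^{\frac{q}{p'}\left(1-\frac{\alpha}{p-1}\right)}\,|J|^{\frac{q}{p'}\left(1-\frac{\alpha}{p-1}\right)};$$
the exponent $\frac{q}{p'}(1-\frac{\alpha}{p-1})$ is non-negative because $\alpha<p-1$, which is what justifies absorbing the factor of $2$ into the constant and produces condition \eqref{eq:seclapcarl} uniformly in $|J|$.

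Finally, Theorem~\ref{thm:sectorial} applied with the weight $t^\alpha$ shows that the Laplace--Carleson embedding $\mathfrak{L} : L^p_{t^\alpha}(0,\,\infty) \to L^q(\mathbb{C}_+,\,\mu)$ is bounded, and composing with the continuous inclusion from the first step yields the stated boundedness of $\mathfrak{L} : L^p_w(0,\,\infty) \to L^q(\mathbb{C}_+,\,\mu)$. The argument is structurally routine; the only point that requires a moment of thought is the scale-covering step, which relies essentially on $(I_n)_{n \in \mathbb{Z}}$ being a \emph{two-sided} geometric sequence (so that arbitrarily small and arbitrarily large $|J|$ are both captured), and on the sign of $1-\alpha/(p-1)$ so that the constant remains bounded after the passage from $|I_n|$ to $|J|$.
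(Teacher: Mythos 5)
Your proof is correct and follows essentially the same route as the paper: the pointwise bound $t^\alpha \leq M\,w(t)$ gives $\|f\|_{L^p_{t^\alpha}(0,\,\infty)} \leq M^{1/p}\|f\|_{L^p_w(0,\,\infty)}$, and one then invokes Theorem~\ref{thm:sectorial} for the weight $t^\alpha$. The only difference is that you explicitly upgrade the dyadic hypothesis on $(\Delta_{I_n})_{n\in\mathbb{Z}}$ to condition \eqref{eq:seclapcarl} for all intervals (using $\Delta_J\subseteq\Delta_{I_n}$ and the positivity of the exponent), a step the paper leaves implicit since its proof of Theorem~\ref{thm:sectorial} only ever uses dyadic scales; either justification is fine.
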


\begin{proof}
By the previous theorem we get that
\begin{displaymath}
\int_{S(\theta)} |\mathfrak{L}f|^q \, d\mu \lessapprox \left\|f\right\|^q_{L^p_{t^\alpha}(0, \, \infty)} \leq \left(\sup_{t>0}\frac{t^\alpha}{w(t)}\right)^{\frac{q}{p}} \left\|f\right\|^q_{L^p_w(0, \, \infty)}.
\end{displaymath}
\end{proof}

\begin{corollary}
Let $B$ and $\mu$ be defined as in Theorem \ref{thm:1stjacob}, let $1<p \leq q < \infty$ and $\alpha < p-1$, and suppose that there exists $0 < \theta < \pi/2$ such that
\begin{displaymath}
	\im(-\lambda_k) < \re(-\lambda_k)\tan \theta \qquad \qquad (\forall k \in \mathbb{N}).
\end{displaymath}
Then the control operator {B} is $L^p_{t^\alpha}$-admissible if and only if there exists a constant $C(\mu)>0$ such that
\begin{displaymath}
	\sum_{k \in \Gamma} |b_k|^q \leq C(\mu) \max_{k \in \Gamma}\left[\re(-\lambda_k)\right]^{\frac{q}{p'}\left(1-\frac{\alpha}{p-1}\right)} \qquad \qquad (\forall \Gamma \subset \mathbb{N}).
\end{displaymath}
\label{cor:seccor}
\end{corollary}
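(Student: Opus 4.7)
The plan is to chain together Theorem \ref{thm:1stjacob} and Theorem \ref{thm:sectorial} and then translate the resulting geometric condition on $\mu$ into the stated combinatorial condition on the coefficients $(b_k)$. By Theorem \ref{thm:1stjacob} applied with the weight $w(t)=t^\alpha$, the control operator $B$ is $L^p_{t^\alpha}$-admissible if and only if the Laplace--Carleson embedding $\mathfrak{L}:L^p_{t^\alpha}(0,\infty)\longrightarrow L^q(\mathbb{C}_+,\mu)$ is bounded. The sectorial hypothesis $\im(-\lambda_k)<\re(-\lambda_k)\tan\theta$ says precisely that every atom $-\lambda_k$ of $\mu=\sum_k|b_k|^q\delta_{-\lambda_k}$ lies in $S(\theta)$, so $\mu$ is supported in the sector. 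Therefore Theorem \ref{thm:sectorial} applies, and admissibility is equivalent to the existence of a constant $C(\mu)>0$ such that
\begin{displaymath}
\mu(\Delta_I)\leq C(\mu)|I|^{\frac{q}{p'}\left(1-\frac{\alpha}{p-1}\right)}
\end{displaymath}
for every interval $I=(0,|I|)\subset\mathbb{R}$.

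It remains to convert this into the stated condition. The key observation is that
\begin{displaymath}
\mu(\Delta_I)=\sum_{k\,:\,\re(-\lambda_k)\leq|I|}|b_k|^q,
\end{displaymath}
since $\mu$ is atomic on the sector and $\Delta_I=\{z\in S(\theta):\re(z)\leq|I|\}$. Note also that, because $\alpha<p-1$, the exponent $\tfrac{q}{p'}(1-\tfrac{\alpha}{p-1})$ is strictly positive, which makes the monotonicity arguments below go through.

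For the forward direction, fix an arbitrary $\Gamma\subset\mathbb{N}$ and set $M=\max_{k\in\Gamma}\re(-\lambda_k)$. Then $\Gamma\subseteq\{k:\re(-\lambda_k)\leq M\}$ and, taking $I=(0,M]$, the sectorial estimate yields
\begin{displaymath}
\sum_{k\in\Gamma}|b_k|^q\leq\mu(\Delta_I)\leq C(\mu)M^{\frac{q}{p'}\left(1-\frac{\alpha}{p-1}\right)},
\end{displaymath}
which is the required inequality. For the converse, given $|I|>0$, apply the coefficient condition to the set $\Gamma=\{k:\re(-\lambda_k)\leq|I|\}$; then $\max_{k\in\Gamma}\re(-\lambda_k)\leq|I|$, and positivity of the exponent gives $\mu(\Delta_I)\leq C(\mu)|I|^{\frac{q}{p'}(1-\frac{\alpha}{p-1})}$. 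One more invocation of Theorem \ref{thm:sectorial} and Theorem \ref{thm:1stjacob} then closes the equivalence.

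There is no real obstacle here beyond bookkeeping: the substantive analytic content is already in Theorem \ref{thm:sectorial}, and the present corollary is essentially a transcription of its conclusion for a purely atomic sectorial measure. The only point requiring a little care is the need for the exponent $1-\alpha/(p-1)$ to be positive so that the passage between $\max_{k\in\Gamma}\re(-\lambda_k)$ and $|I|$ is monotone; this is guaranteed by the standing assumption $\alpha<p-1$.
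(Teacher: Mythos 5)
Your argument is correct and is exactly the derivation the paper intends: the corollary is stated there without proof as an immediate consequence of Theorem \ref{thm:1stjacob} (reducing $L^p_{t^\alpha}$-admissibility to boundedness of $\mathfrak{L}:L^p_{t^\alpha}(0,\infty)\to L^q(\mathbb{C}_+,\mu)$ for the atomic measure $\mu=\sum_k|b_k|^q\delta_{-\lambda_k}$) combined with Theorem \ref{thm:sectorial}, and your translation between the $\mu(\Delta_I)$ condition and the coefficient condition, using positivity of the exponent from $\alpha<p-1$, is the right bookkeeping.
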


\begin{example}
Consider the following one-dimensional heat PDE on the interval $[0, \, 1]$:
\begin{displaymath}
\begin{cases}
	\frac{\partial z}{\partial t} (\zeta, \, t) = \frac{\partial^2 z}{\partial \zeta^2} (\zeta, \, t) \\ 
	\frac{\partial z}{\partial \zeta} (0, \, t) = 0 \\
	\frac{\partial z}{\partial \zeta} (1, \, t) = u(t) \\
	 z(\zeta,0) = z_0(\zeta)
	\end{cases} \qquad \qquad \zeta \in (0, 1), \, t \geq 0.
\end{displaymath}
This system can be expressed in the form \eqref{eq:} with $H=\ell^2, \, Ae_n = -n^2\pi^2e_n$, and $b_n = 1$, for each $n \in \mathbb{N}$ (see Example 3.6 in \cite{jacob2014}). For $1<p \leq 2$ and $\alpha < p-1$, by the previous corollary we have that $B$ is $L^p_{t^\alpha}$-admissible if and only if $p \geq \frac{4}{3}(\alpha+1)$.
\end{example}

\begin{example}
Let $1<p\leq 2$. Consider the following parabolic diagonal system: let $X~=~\ell^2, \, \lambda_n~=~-2^n, \, b_n~=~2^n/n$, and let $A$ be defined by $Ae_n~=~\lambda_n e_n, \, n~\in~\mathbb{N}$. By the previous corollary, if $\alpha \leq -1$, then the control operator $B$ is $L^2_{t^\alpha}$-admissible, and if $-1 < \alpha < p-1$, then $B$ is not $L^2_{t^\alpha}$-admissible. This contrasts with the unweighted setting, in which for all $1<p< \infty$, the control operator $B$ is not $L^p$-admissible. This was proved only very recently in \cite{jacob2016} (Example 5.2).
\end{example}

\begin{theorem} Let $\mu$ be a positive Borel measure supported only in the sector $S(\theta), \, 0 < \theta < \pi/2$. If there exists an interval $I \subset i\mathbb{R}$, centred at 0, and a constant $C(\mu)>0$ such that
	\begin{equation}
		\mu\left(Q(2^k|I|)\right) \leq C(\mu) \left[\left(\nu_0\left(Q(2^k|I|)\right)\right)^{-\frac{1}{2}} + \left(\sum_{n=0}^m \frac{\nu_n\left(Q(2^k|I|)\right)}{(2^k |I|)^{2n}} \right)^{-\frac{1}{2}}\right]^{-2},
		\label{eq:similarcon}
	\end{equation}
for all $k \in \mathbb{Z}$, then $\mu$ is a Carleson measure for $A^2_{(m)}$.
\end{theorem}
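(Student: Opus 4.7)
The plan is to observe that the apparently symmetric hypothesis already contains the ordinary Zen Carleson condition for $\mu$ with respect to $\nu_0$, and then to conclude via the trivial embedding $A^2_{(m)} \hookrightarrow A^2_{\nu_0}$.

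First, for any positive reals $\alpha, \beta$ one has
\begin{displaymath}
\left(\alpha^{-1/2} + \beta^{-1/2}\right)^{-2} \leq \alpha,
\end{displaymath}
since $\alpha^{-1/2} + \beta^{-1/2} \geq \alpha^{-1/2}$. Specialising to $\alpha = \nu_0(Q(2^k|I|))$, the hypothesis \eqref{eq:similarcon} reduces to
\begin{displaymath}
\mu(Q(2^k|I|)) \leq C(\mu)\,\nu_0(Q(2^k|I|)) \qquad (\forall k \in \mathbb{Z}).
\end{displaymath}
Furthermore, by the very definition of the norm on $A^2(\mathbb{C}_+, (\nu_n)_{n=0}^m)$,
\begin{displaymath}
\|F\|^2_{A^2(\mathbb{C}_+, (\nu_n)_{n=0}^m)} = \sum_{n=0}^m \|F^{(n)}\|^2_{A^2_{\nu_n}} \geq \|F\|^2_{A^2_{\nu_0}},
\end{displaymath}
so the inclusion $A^2_{(m)} \hookrightarrow A^2_{\nu_0}$ is continuous of norm at most $1$. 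Consequently, once $\mu$ is shown to be a Carleson measure for the Zen space $A^2_{\nu_0}$ it is automatically a Carleson measure for $A^2_{(m)}$, and the proof reduces to establishing $\mu(Q(a)) \leq C\,\nu_0(\overline{Q(a)})$ uniformly in $a \in \mathbb{C}_+$ (the necessary and sufficient Zen condition, as stated in the remark following Theorem \ref{thm:necessary}).

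The remaining task is to extend the bound from the special squares $Q(2^k|I|)$ to arbitrary $Q(a)$ using the sectoriality of $\mu$ and the $\Delta_2$ property of $\tilde{\nu}_0$. If $Q(a) \cap S(\theta) = \varnothing$ the bound is trivial; otherwise every $z \in Q(a) \cap S(\theta)$ satisfies $\re(z) \leq 2\re(a)$ and $|\im(z)| \leq \re(z)\tan\theta \leq 2\re(a)\tan\theta$, so choosing the unique $k \in \mathbb{Z}$ with $2^{k-1}|I| < 2\re(a)\max(1, \tan\theta) \leq 2^k|I|$ forces $Q(a) \cap S(\theta) \subset Q(2^k|I|)$. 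Therefore
\begin{displaymath}
\mu(Q(a)) = \mu(Q(a) \cap S(\theta)) \leq \mu(Q(2^k|I|)) \leq C(\mu)\,\nu_0(Q(2^k|I|)).
\end{displaymath}
A bounded (depending only on $\theta$) number of applications of the $\Delta_2$ condition for $\tilde{\nu}_0$ yields $\nu_0(Q(2^k|I|)) \leq C'_\theta\,\nu_0(Q(\re(a)))$, and $\nu_0(Q(\re(a))) = \nu_0(Q(a))$ by translation invariance of Lebesgue measure along $i\mathbb{R}$, which closes the argument.

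The only substantive observation is the first-paragraph inequality $(\alpha^{-1/2} + \beta^{-1/2})^{-2} \leq \alpha$: it collapses the two-sided hypothesis onto the stronger Zen Carleson bound and makes the $\beta$-part of \eqref{eq:similarcon} redundant for this purpose. Everything afterwards is routine; the one place where care is required is the geometric inclusion $Q(a) \cap S(\theta) \subset Q(2^k|I|)$ and the explicit tracking of $\theta$-dependent constants arising from $\Delta_2$.
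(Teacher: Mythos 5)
Your proof is correct, but it takes a genuinely different route from the paper's. The pivotal observation is that the $n=0$ summand of the second bracketed term forces $\beta := \sum_{n=0}^m \nu_n(Q(2^k|I|))/(2^k|I|)^{2n} \geq \nu_0(Q(2^k|I|)) =: \alpha$, whence $(\alpha^{-1/2}+\beta^{-1/2})^{-2}$ lies between $\alpha/4$ and $\alpha$; so \eqref{eq:similarcon} is equivalent, up to a factor of $4$, to the plain Zen condition $\mu(Q(2^k|I|)) \leq C\,\nu_0(Q(2^k|I|))$ on those special squares. Your remaining steps are sound: the inclusion $Q(a)\cap S(\theta) \subset Q(2^k|I|)$ at the dyadic scale $2^k|I| \asymp \re(a)\max(1,\tan\theta)$, the boundedly many ($\theta$-dependent) applications of \eqref{eq:delta2} giving $\nu_0(Q(2^k|I|)) \leq C_\theta\, \nu_0(\overline{Q(a)})$, the sufficiency half of the Zen-space Carleson theorem of \cite{jacob2013} (cited in the remark after Theorem \ref{thm:necessary} for $p=q$, $m=0$), and the contractive inclusion $A^2_{(m)} \hookrightarrow A^2_{\nu_0}$. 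The paper instead argues directly and self-containedly: it splits the Laplace integral at $t=1$ inside Lemma \ref{estlemma} --- which is exactly where the two reciprocal square roots in \eqref{eq:similarcon} come from, since for $t<1$ only the $\nu_0$ lower bound on $w_{(m)}(tx)$ is available while for $t\geq 1$ the full sum is --- and then sums over the dyadic strips $T_k$ of the sector using the $L^p$-boundedness of the maximal operator. What the paper's method buys is a template that would still function if the hypothesis were weakened so that the second term could genuinely dominate (which is what the closing remark about matching \eqref{eq:necessarycarlesonforAp} is aiming at); what your method buys is brevity, at the price of outsourcing the analysis to the known $m=0$ result. Your reduction also makes explicit a fact worth recording: as stated, hypothesis \eqref{eq:similarcon} is no weaker than the $\nu_0$-Zen condition, so the theorem does not assert more than boundedness of the composite $A^2_{(m)} \hookrightarrow A^2_{\nu_0} \hookrightarrow L^2(\mathbb{C}_+,\mu)$.
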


\begin{proof}
For all $t, \, x >0$ we have
\begin{align*}
w_{(m)}(tx) &\defeq 2\pi \sum_{n=0}^m (tx)^{2n} \int_0^\infty e^{-2rtx} \, d\tilde{\nu}_n(r) \\
&\geq 2\pi \sum_{n=0}^m t^{2n} 2^{2n} \left(\frac{x}{2}\right)^{2n} e^{-t} \tilde{\nu}_n\left[0, \, \frac{1}{2x}\right) \\
&\geq 2\pi \sum_{n=0}^m t^{2n} \left(\frac{x}{2}\right)^{2n} e^{-t} \frac{\tilde{\nu}_n\left[0, \, \frac{2}{x}\right)}{R^2_n},
\end{align*}
where each $R_n$ is the supremum obtained from the \eqref{eq:delta2}-condition, corresponding to $\tilde{\nu}_n$. Clearly we have that
\begin{displaymath}
	w_{(m)}(tx) \geq 2\pi e^{-t} \frac{\tilde{\nu}_0\left[0, \, \frac{2}{x}\right)}{R^2_0}, \qquad \qquad (\forall t, x > 0),
\end{displaymath}
and
\begin{displaymath}
	w_{(m)}(tx) \geq 2\pi \sum_{n=0}^m \left(\frac{x}{2}\right)^{2n} e^{-t} \frac{\tilde{\nu}_n\left[0, \, \frac{2}{x}\right)}{R^2_n}, \qquad \qquad (\forall x > 0, \, t \geq 1).
\end{displaymath}
Let
\begin{displaymath}
	P \, : \, 0 = \ldots = t_{-k}= \ldots  = t_{-1} <  t_0 = 1 \leq t_1 \leq \ldots \leq t_k \leq \ldots, \qquad \qquad (k \in \mathbb{N}),
\end{displaymath}
be a partition of $[0, \, \infty)$, and let $x_k = 2^{-k+1}|I|^{-1}, \, k \in \mathbb{Z}$. Then
\begin{align*}
	\Theta(P, \, w_{(m)}, x_k) &\defeq 2\left[\frac{e^{-t^*_{-1}}}{\sqrt{w_{(m)}(t^*_{-1} x_k)}} + \sum_{l=0}^{\infty} \frac{e^{-t^*_l}}{\sqrt{w_{(m)}(t^*_l x_k)}}(t_{l+1}-1)\right] \\
&\leq  \sqrt{\frac{2}{\pi}} \left[ \frac{R_0}{\sqrt{\tilde{\nu}_0\left[0, \, \frac{2}{x_k}\right)}} + \frac{\sum_{l=0}^\infty e^{-\frac{t_l}{2}}t_{l+1}}{\sqrt{\sum_{n=0}^m \left(\frac{x_k}{2}\right)^{2n} \frac{\tilde{\nu}_n\left[0, \, \frac{2}{x_k}\right)}{R^2_n}}} \right].
\end{align*}
And by Lemma \ref{estlemma} we get that for any $z \in T_k$
\begin{align*}
	|\mathfrak{L}f(z)| &\leq \sqrt{\frac{2}{\pi}} \left[ \frac{R_0}{\sqrt{\tilde{\nu}_0\left[0, \, \frac{2}{x_k}\right)}} + \frac{\sum_{l=0}^\infty e^{-\frac{t_l}{2}}t_{l+1}}{\sqrt{\sum_{n=0}^m \left(\frac{x_k}{2}\right)^{2n} \frac{\tilde{\nu}_n\left[0, \, \frac{2}{x_k}\right)}{R^2_n}}} \right] x_k Mg(x_k)\\
	&= \sqrt{\frac{2}{\pi}} \left[ \frac{R_0}{\sqrt{\frac{1}{x_k}\tilde{\nu}_0\left[0, \, \frac{2}{x_k}\right)}} + \frac{\sum_{l=0}^\infty e^{-\frac{t_l}{2}}t_{l+1}}{\sqrt{\sum_{n=0}^m \left(\frac{x_k}{2}\right)^{2n-1} \frac{\tilde{\nu}_n\left[0, \, \frac{2}{x_k}\right)}{R^2_n}}} \right] \sqrt{x_k} Mg(x_k)\\
	&\lessapprox \left[\left(\nu_0\left(Q(2^k|I|)\right)\right)^{-\frac{1}{2}} + \left(\sum_{n=0}^m \frac{\nu_n\left(Q(2^k|I|)\right)}{(2^k |I|)^{2n}} \right)^{-\frac{1}{2}}\right] \sqrt{x_k}Mg(x),
\end{align*}
so for any $\mathfrak{L}f=F \in A^2_{(m)}$ we have
\begin{displaymath}
	\int_{\mathbb{C}_+}|F|^2 \, d\mu = \int_{S(\theta)}|\mathfrak{L}f|^2 \, d\mu \lessapprox \sum_{k=-\infty}^\infty x_k (Mg(x_k))^2 \lessapprox \|f\|^2_{L^2_{w_{(m)}}(0, \infty)} = \|F\|^2_{A^2_{(m)}},
\end{displaymath}
as required.
\end{proof}

\begin{remark}
Note that condition \eqref{eq:similarcon}, although it looks somehow superficial, actually almost matches condition \eqref{eq:necessarycarlesonforAp} from Theorem \ref{thm:necessary} (with $m=1$ and $p=q=2$). It suggests that if there exists a criterion characterising Carleson measures for $A^2_{(m)}$, which is both necessary and sufficient, then it must be expressible in a very similar form. This, however, still remains to be done.
\end{remark}

\textbf{Acknowledgement} 	The author of this article would like to thank the UK Engineering and Physical Sciences Research Council (EPSRC) and the School of Mathematics at the University of Leeds for their financial support. He is also very indebted to Professor Jonathan R. Partington for all the invaluable comments and help in preparation of this research paper.

\end{document}